\numberwithin{equation}{section}
\newtheorem{lemma}{Lemma}[section]
\newtheorem{theorem}{Theorem}[section]
\newtheorem{proposition}{Proposition}[section]
\newtheorem{definition}{Definition}[section]
\newtheorem{remark}{Remark}[section]
\newtheorem{corollary}{Corollary}[section]
\newenvironment{proof}{\smallskip\noindent{\bf Proof.}\rm}{\hspace*{\fill} $\Box$\medskip}
\newenvironment{proofTh0}{\smallskip\noindent{\bf Proof of Lemma~\ref{Th0}.}\rm}{\hspace*{\fill} $\Box$\medskip}
\newenvironment{proofReduction}{\smallskip\noindent{\bf Proof of Lemma~\ref{reduction}.}\rm}{\hspace*{\fill} $\Box$\medskip}
\newenvironment{proofProp13}{\smallskip\noindent{\bf Proof of Lemmas~\ref{prop1} and \ref{prop3}.}\rm}{\hspace*{\fill} $\Box$\medskip}
\newenvironment{proofTh1}{\smallskip\noindent{\bf Proof of Theorem~\ref{Th1}.}\rm}{\hspace*{\fill} $\Box$\medskip}
\newenvironment{proofTh2}{\smallskip\noindent{\bf Proof of Theorem~\ref{Th2}.}\rm}{\hspace*{\fill} $\Box$\medskip}
\newenvironment{proofTh3}{\smallskip\noindent{\bf Proof of Theorem~\ref{Th3}.}\rm}{\hspace*{\fill} $\Box$\medskip}
\renewcommand{\d}{\mathrm d}
\newcommand{\dx}{\mathrm d x}
\newcommand{\e}{\mathrm e}
\renewcommand{\i}{\mathrm i}
\renewcommand{\o}{\mathrm o}
\newcommand{\res}{\operatorname{res}}
\newcommand{\rank}{\operatorname{rank}}
\newcommand{\Ran}{\operatorname{Ran}}
\newcommand{\supp}{\operatorname{supp}}
\newcommand{\mathfrakS}{\mathpalette\bigmathfrakS\relax}
\newcommand{\bigmathfrakS}[2]{\scalebox{1.2}{$#1\mathfrak{s}$}}
\newcommand{\bigc}{\scalebox{1.2}{$c$}}
\newcommand{\bigs}{\scalebox{1.2}{$s$}}
\title{On inverse spectral problems for self-adjoint Dirac operators with general boundary conditions}
\author{
D.~V.~Puyda\thanks{\emph{Email addresses:} dpuyda@gmail.com (D.~V.~Puyda)}\\
\emph{Ivan Franko National University of Lviv}\\
\emph{1 Universytetska str., Lviv, 79000, Ukraine}
}
\date{}
\begin{document}

\maketitle

\begin{abstract}
We consider the self-adjoint Dirac operators on a finite interval with summable matrix-valued potentials and general boundary conditions. For such operators, we study the inverse problem of reconstructing the potential and the boundary conditions of the operator from its eigenvalues and suitably defined norming matrices.
\end{abstract}

\section{Introduction}

In this paper, we consider the self-adjoint Dirac operators on $(-1,1)$ generated by the differential expressions
\begin{equation}\nonumber
\mathfrak t_q:=\frac1\i\begin{pmatrix}I&0\\0&-I\end{pmatrix}\frac\d{\d x}+
\begin{pmatrix}0&q\\q^*&0\end{pmatrix}
\end{equation}
and general boundary conditions of the form
$$
Ay(-1)+By(1)=0.
$$
Here, $q$ is an $r\times r$ matrix-valued function with entries belonging to $L_p(-1,1)$, $p\in[1,\infty)$, called the \emph{potential} of the operator, $I$ is the $r\times r$ identity matrix, $A$ and $B$ are $2r\times2r$ matrices with complex entries such that the operator is self-adjoint. For such operators, we introduce the notion of the spectral data -- eigenvalues and suitably defined norming matrices. We then study the inverse problem of reconstructing the potential and the boundary conditions of the operator from its spectral data.

Inverse spectral problems for Dirac and Sturm--Liouville operators with matrix-valued potentials arise in many areas of modern physics and mathematics. For instance, the inverse problems for quantum graphs (see, e.g., \cite{kuchment}) in some cases can be reduced to the ones for the operators with matrix-valued potentials.
Among the recent investigations in the area of inverse problems for Dirac-type systems we mention, e.g., the ones by Albeverio, Hryniv and Mykytyuk \cite{MykDirac}, Gesztesy et al. \cite{ClarkGesz,KisMakGesz,GeszOpV}, Malamud et al. \cite{Malamud3,Malamud1,Malamud2}, Sakhnovich \cite{Sakh1,Sakh2}.
The inverse problem of reconstructing the skew-self-adjoint Dirac system with a rectangular matrix-valued potential from the Weyl function was recently solved in \cite{SakhRect}.
Problems similar to the ones considered in this paper were recently treated for Sturm-Liouville operators with matrix-valued potentials in \cite{Korot1,Korot2,sturm}.
A considerable contribution to the spectral theory of differential operators with matrix-valued potentials was made by Rofe-Beketov et al. (see, e.g., \cite{Rofe}).
The operators with general boundary conditions also remain an object of interest these days. For instance, the completeness problem of root functions of general boundary value problems for the first order Dirac-type systems was recently solved in \cite{MalamudCompl}. We refer the reader to the extensive reference lists in [1--7, 10--12, 16, 18--20] for further results on the subject.

Recently, the inverse problem of reconstructing the self-adjoint Dirac operators with some separated boundary conditions from eigenvalues and norming matrices was solved in \cite{dirac1} (for the operators with square-integrable matrix-valued potentials) and \cite{dirac2} (for the more general case of the operators with summable matrix-valued potentials). In the present paper, we extend the results of \cite{dirac2} in order to solve the inverse spectral problem for the operators with general (especially, non-separated) boundary conditions.

The approach consists in reducing the problem to the one for the operators with separated boundary conditions acting in the space $L_2((0,1),\mathbb C^{4r})$. We then develop the Krein accelerant method \cite{dirac1,sturm,dirac2} in order to solve the inverse spectral problem for the operators with general separated boundary conditions.
We show that the accelerants of such operators do not depend on boundary conditions and uniquely determine the potentials of the operators. The boundary conditions can be then reconstructed from the asymptotics of the spectral data.

The paper is organized as follows. In the following section, we give the precise setting of the problem. In Sect.~\ref{results}, we introduce the approach and formulate the main results. In Sect.~\ref{proofs}, we prove the main results of this paper.

\emph{Notations.} Throughout this paper, we write $\mathcal M_r$ for the set of all $r\times r$ matrices with complex entries and identify $\mathcal M_r$ with the Banach algebra of linear operators in $\mathbb C^r$ endowed with the standard norm. We write $I=I_r$ for the $r\times r$ identity matrix and $\mathcal U_r$ for the set of all unitary matrices $U\in\mathcal M_r$. For an arbitrary $p\in[1,\infty)$, we write $\mathcal Q_p:=L_p((-1,1),\mathcal M_r)$ for the set of all $r\times r$ matrix-valued functions with entries belonging to $L_p(-1,1)$ and endow $\mathcal Q_p$ with the norm
$$
\|q\|_{\mathcal Q_p}:=\left(\int_{-1}^1\|q(s)\|^p\ \d s\right)^{1/p},\qquad q\in\mathcal Q_p.
$$
Similarly, we set $\mathfrak Q_p:=L_p((0,1),\mathcal M_{2r})$. The superscript $\top$ designates the transposition of vectors and matrices, e.g., $(c_1,\,c_2)^\top$ is the column vector $\begin{pmatrix}c_1\\c_2\end{pmatrix}$.

\section{Setting of the problem}\label{setting}

Given an arbitrary $q\in\mathcal Q_p$, we set
\begin{equation}\nonumber
J:=\frac{1}{\i}\begin{pmatrix}I&0\\0&-I\end{pmatrix},\qquad
Q:=\begin{pmatrix}0&q\\q^*&0\end{pmatrix}
\end{equation}
and consider the differential expression
$$
\mathfrak t_q:=J\frac{\d}{\dx}+Q
$$
on the domain $D(\mathfrak t_q)=W_2^1((-1,1),\mathbb C^{2r})$, where $W_2^1$ is the Sobolev space (see Appendix~\ref{AppSpaces}).
In the Hilbert space $\mathcal H:=L_2((-1,1),\mathbb C^{2r})$, we introduce the \emph{maximal operator} $T_q$ by the formula $T_qy=\mathfrak t_q(y)$,
$$
D(T_q)=\{y\in D(\mathfrak t_q)\mid\mathfrak t_q(y)\in\mathcal H\}.
$$
The adjoint operator $T_q^0:={T_q}^*$ is the restriction of $T_q$ onto the domain
$$
D(T_q^0)=\{y\in D(T_q)\mid y(-1)=y(1)=0\}.
$$
By definition, the operator $T_q^0$ will be called the \emph{minimal} one.
The objects of our study are self-adjoint extensions of the minimal operator $T_q^0$.


It is known (see, e.g., \cite{kreinvolterra}) that every self-adjoint extension of the minimal operator $T_q^0$ is the restriction of the maximal operator $T_q$ onto the domain
\begin{equation}\label{TABdom}
D(T)=\{y\in D(T_q)\mid Ay(-1)+By(1)=0\},
\end{equation}
where $A,B\in\mathcal M_{2r}$ are such that
\begin{equation}\nonumber
\rank(A\ \ B)=2r,\qquad AJA^*=BJB^*.
\end{equation}
Evidently, the self-adjoint extensions of $T_q^0$ cannot be parameterized by the matrices $A$ and $B$ uniquely since different pairs $(A,B)$ may lead to the same self-adjoint extension. However, using the standard technique involving the concept of boundary triplets one can prove the following lemma providing a \emph{unique} characterization of all self-adjoint extensions of the minimal operator $T_q^0$:

\begin{lemma}\label{Th0}
A linear operator $T:\mathcal H\to\mathcal H$ is a self-adjoint extension of the minimal operator $T_q^0$ if and only if there exists a unitary matrix $U\in\mathcal U_{2r}$ such that $T$ is the restriction of the maximal operator $T_q$ onto the domain (\ref{TABdom}) with
\begin{equation}\label{AUBU}
A=A_U:=P_2+UP_1,\qquad B=B_U:=P_1+UP_2,
\end{equation}
where
\begin{equation}\nonumber
P_1:=\begin{pmatrix}I&0\\0&0\end{pmatrix},\qquad P_2:=\begin{pmatrix}0&0\\0&I\end{pmatrix}.
\end{equation}
\end{lemma}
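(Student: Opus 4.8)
The plan is to translate self-adjointness of a restriction of the maximal operator $T_q$ into an algebraic condition on the boundary data, and then to recognise that condition as unitarity of the parameter $U$. The backbone is Green's formula, which I would derive first. Since $Q=Q^*$ and $J^*=-J$, integration by parts on $W_2^1$ gives, for all $y,z\in D(T_q)$,
\[
(T_qy,z)-(y,T_qz)=(Jy(1),z(1))-(Jy(-1),z(-1)).
\]
Writing $y=(y_1,y_2)^\top$ and collecting the four boundary vectors into
$\xi_y:=(y_1(-1),y_2(1))^\top$ and $\eta_y:=(y_1(1),y_2(-1))^\top$ in $\mathbb C^{2r}$, a short computation using $J=\mathrm{diag}(-\i I,\i I)$ recasts this as
\[
(T_qy,z)-(y,T_qz)=\i\bigl[(\xi_y,\xi_z)-(\eta_y,\eta_z)\bigr].
\]
Because an element of $D(T_q)=W_2^1$ may carry arbitrary boundary values, the pair $(\xi_y,\eta_y)$ exhausts $\mathbb C^{2r}\times\mathbb C^{2r}$; hence $(\mathbb C^{2r},\,y\mapsto\xi_y,\,y\mapsto\eta_y)$ underlies a boundary triplet for $T_q$, and in particular $T_q^0$ has equal deficiency indices $(2r,2r)$.

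The next step is the bookkeeping that links $U$ to these coordinates. Using $P_1=\mathrm{diag}(I,0)$ and $P_2=\mathrm{diag}(0,I)$ one checks directly that
\[
A_Uy(-1)+B_Uy(1)=(P_2+UP_1)y(-1)+(P_1+UP_2)y(1)=\eta_y+U\xi_y,
\]
so the domain (\ref{TABdom}) with $A=A_U$, $B=B_U$ is exactly $\{y\in D(T_q)\mid\eta_y=-U\xi_y\}$. For the \emph{sufficiency} direction I would now argue that if $U\in\mathcal U_{2r}$ and $y,z$ satisfy $\eta=-U\xi$, then $(\eta_y,\eta_z)=(U\xi_y,U\xi_z)=(\xi_y,\xi_z)$, so the boundary form above vanishes and $T_U$ is symmetric. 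The associated subspace $\mathcal L_U:=\{(\xi,-U\xi)\}$ has dimension $2r$, equal to the deficiency index, so the symmetric extension $T_U$ is maximal, i.e.\ self-adjoint. (Equivalently, one verifies the known criteria $\rank(A_U\ B_U)=2r$ and $A_UJA_U^*=B_UJB_U^*$; a direct computation gives $A_UA_U^*+B_UB_U^*=2I$ and $A_UJA_U^*=B_UJB_U^*=\i P_2-\i UP_1U^*$, the latter using $P_1+P_2=I$ and $UU^*=I$.)

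For \emph{necessity and uniqueness}, take an arbitrary self-adjoint extension $T$. By the description recalled before the lemma its domain is cut out by some admissible $(A,B)$, and since two pairs define the same extension precisely when $(A\ B)$ have the same kernel, $T$ is determined by the $2r$-dimensional subspace $\mathcal L\subset\mathbb C^{2r}\times\mathbb C^{2r}$ of admissible $(\xi_y,\eta_y)$. Self-adjointness forces $\mathcal L$ to be maximal isotropic for the form $(\xi,\xi')-(\eta,\eta')$, i.e.\ $(\xi_y,\xi_z)=(\eta_y,\eta_z)$ for all members. Applying isotropy to an element $(0,\eta)\in\mathcal L$ yields $(\eta,\eta)=0$, so $\eta=0$; thus $\mathcal L\cap(\{0\}\times\mathbb C^{2r})=\{0\}$, and as $\dim\mathcal L=2r$ the subspace is the graph of a linear map, which I write as $\eta=-U\xi$. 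Isotropy now reads $(\xi,\xi')=(U\xi,U\xi')$ for all $\xi,\xi'$, i.e.\ $U^*U=I$, so $U\in\mathcal U_{2r}$ and $\mathcal L=\mathcal L_U$, giving $T=T_U$. Uniqueness is then automatic, since $\mathcal L_U=\mathcal L_{U'}$ equates the two graphs and hence $U=U'$.

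The routine part is the matrix algebra for $A_U,B_U$; the step I expect to require the most care is the necessity direction, specifically the nondegeneracy argument showing that every self-adjoint boundary subspace $\mathcal L$ projects isomorphically onto the $\xi$-coordinates (so that it is a graph) and that the resulting map is forced to be unitary. Getting the combinatorics of the four boundary blocks right in the Green identity — so that the boundary form becomes precisely $\i[(\xi_y,\xi_z)-(\eta_y,\eta_z)]$ and meshes with the identity $A_Uy(-1)+B_Uy(1)=\eta_y+U\xi_y$ — is the other place where a sign or index slip would derail the matching.
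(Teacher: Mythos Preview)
Your proof is correct and follows essentially the same route as the paper: your boundary coordinates $\xi_y,\eta_y$ are exactly the paper's $E_2f,E_1f$, your Green identity $\i[(\xi_y,\xi_z)-(\eta_y,\eta_z)]$ is their $(\boldsymbol JEf\mid Eh)$, and your domain condition $\eta_y=-U\xi_y$ is their $\ker(E_1+UE_2)$. In fact you spell out the necessity step (that the maximal isotropic subspace is the graph of a unitary) in more detail than the paper, which simply asserts it from $\|E_1f\|=\|E_2f\|$; the only cosmetic slip is writing $D(T_q)=W_2^1$, whereas for $p<2$ one should say $D(T_q)\subset W_2^1$ and note separately that the boundary trace is still surjective.
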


\noindent

According to Lemma~\ref{Th0}, we can parameterize all self-adjoint extensions of the minimal operator $T_q^0$ by unitary matrices $U\in\mathcal U_{2r}$. For an arbitrary $U\in\mathcal U_{2r}$, we denote by $T_{q,U}$ the restriction of the maximal operator $T_q$ onto the domain (\ref{TABdom}) with $A=A_U$ and $B=B_U$ given by formula (\ref{AUBU}).
For the operators $T_{q,U}$, we introduce the notion of the \emph{spectral data} -- eigenvalues and suitably defined norming matrices.

More precisely, the spectrum of the operator $T_{q,U}$ is purely discrete and consists of countably many isolated real eigenvalues of finite multiplicity accumulating only at $+\infty$ and $-\infty$. Throughout this section, we denote by $\lambda_j:=\lambda_j(q,U)$, $j\in\mathbb Z$, the pairwise distinct eigenvalues of the operator $T_{q,U}$ labeled in increasing order so that $\lambda_0<0\le\lambda_1$.


In order to introduce the norming matrices of the operator $T_{q,U}$, it is convenient to use the constructive definition which is similar to the one suggested in \cite{Korot1}. For every $\lambda\in\mathbb C$, we denote by $Y_q(\cdot,\lambda)\in W_2^1((-1,1),\mathcal M_{2r})$ a $2r\times2r$ matrix-valued solution of the Cauchy problem
\begin{equation}\label{MainCauchyProbl}
J\frac\d{\d x}Y+QY=\lambda Y,\qquad Y(0,\lambda)=I_{2r},
\end{equation}
where $I_{2r}$ is the $2r\times2r$ identity matrix. For every $j\in\mathbb Z$, we set
$$
M_j:=\frac12\,\int_{-1}^1 Y_q(s,\lambda_j)^*Y_q(s,\lambda_j)\ \d s.
$$
It follows that for every $j\in\mathbb Z$, $M_j={M_j}^*>0$. We then denote by $P_j:\mathbb C^{2r}\to\mathbb C^{2r}$ the orthogonal projector onto $\mathcal E_j:=\ker [A_U Y_q(-1,\lambda_j)+B_U Y_q(1,\lambda_j)]$ and define the positive self-adjoint operator $B_j:\mathcal E_j\to\mathcal E_j$ by setting
\begin{equation}\nonumber
B_j:=(P_j M_j P_j)\big|_{\mathcal E_j}.
\end{equation}
\begin{definition}\label{SDTqU}
For every $j\in\mathbb Z$, we set
$$
A_j(q,U):={B_j}^{-1} P_j
$$
and call $A_j(q,U)$ the norming matrix of the operator $T_{q,U}$ corresponding to the eigenvalue $\lambda_j(q,U)$.
The sequence
$$
\mathfrak a_{q,U}:=((\lambda_j(q,U), A_j(q,U)))_{j\in\mathbb Z}
$$
will be called the spectral data of the operator $T_{q,U}$.
\end{definition}

\begin{remark}\label{NormMatrRem}\rm
It follows from the definition of the norming matrices that $A_j M_j A_j=A_j$ for every $j\in\mathbb Z$, $A_j:=A_j(q,U)$. This will play an important role in the proof of Lemma~\ref{reduction} below.
\end{remark}

For the operators $T_{q,U}$, we study the inverse problem of reconstructing the potential $q$ and the unitary matrix $U$ from the spectral data. We shall give a complete description of the class
\begin{equation}\label{Tsd}
\mathfrak A_p:=\{\mathfrak a_{q,U}\mid q\in\mathcal Q_p,\ U\in\mathcal U_{2r}\}
\end{equation}
of the spectral data, show that the spectral data of the operator $T_{q,U}$ determine the potential $q$ and the unitary matrix $U$ uniquely and suggest how to find $q$ and $U$ from the spectral data.

\section{The approach and the main results}\label{results}

Our approach consists in reducing the problem for the operators $T_{q,U}$ to the one for the operators with separated boundary conditions that we now introduce.

Let $V\in\mathfrak Q_p$ (see Notations) be an arbitrary $2r\times2r$ matrix-valued function with entries belonging to $L_p(0,1)$, $p\in[1,\infty)$. Set
\begin{equation}\label{theta1Q}
\boldsymbol J:=\frac1\i\begin{pmatrix}I_{2r}&0\\0&-I_{2r}\end{pmatrix},\qquad
\boldsymbol V:=\begin{pmatrix}0&V\\V^*&0\end{pmatrix}
\end{equation}
and consider the differential expression
$$
\mathfrakS_V:=\boldsymbol J\frac\d{\d x}+\boldsymbol V
$$
on the domain
$$
D(\mathfrakS_V)=\left\{f:=\begin{pmatrix}f_1\\f_2\end{pmatrix} \ \Bigg| \ f_1,f_2\in W_2^1((0,1),\mathbb C^{2r}) \right\}.
$$
In the Hilbert space $\mathbb H:=L_2((0,1),\mathbb C^{2d})$, $d:=2r$, we introduce the auxiliary operator $S_{V,U}$, where $U\in\mathcal U_{2r}$, by the formula $S_{V,U}f=\mathfrakS_V(f)$,
$$
D(S_{V,U})=\{f\in D(\mathfrakS_V)\mid \mathfrakS_V(f)\in\mathbb H,\
f_1(0)=f_2(0),\ f_1(1)=Uf_2(1)\}.
$$

As in the case of the operators $T_{q,U}$, the spectrum of the operator $S_{V,U}$ is purely discrete and consists of countably many isolated real eigenvalues of finite multiplicity accumulating only at $+\infty$ and $-\infty$. In what follows, we denote by $\zeta_j:=\zeta_j(V,U)$, $j\in\mathbb Z$, the pairwise distinct eigenvalues of the operator $S_{V,U}$ labeled in increasing order so that $\zeta_0<0\le\zeta_1$.

For the operator $S_{V,U}$, the notion of the \emph{Weyl--Titchmarsh function} can be defined as in \cite{ClarkGesz} (see (\ref{WeylPrecise}) below for a precise definition); the Weyl--Titchmarsh function of the operator $S_{V,U}$ is a $2r\times2r$ matrix-valued meromorphic Herglotz function and $\{\zeta_j\}_{j\in\mathbb Z}$ is the set of its poles. This allows us to introduce the spectral data of the operator $S_{V,U}$ as in \cite{dirac1,dirac2}:

\begin{definition}\label{SDSQU}
Let $M_{V,U}$ be the Weyl--Titchmarsh function of the operator $S_{V,U}$. For every $j\in\mathbb Z$, we set
$$
C_j(V,U):=-\underset{\zeta=\zeta_j(V,U)}\res M_{V,U}(\zeta)
$$
and call $C_j(V,U)$ the norming matrix of the operator $S_{V,U}$ corresponding to the eigenvalue $\zeta_j(V,U)$. The sequence
$$
\mathfrak b_{V,U}:=((\zeta_j(V,U),C_j(V,U)))_{j\in\mathbb Z}
$$
will be called the spectral data of the operator $S_{V,U}$. The $2r\times2r$ matrix-valued measure
$$
\mu_{V,U}:=\sum_{j\in\mathbb Z} C_j(V,U)\delta_{\zeta_j(V,U)},
$$
where $\delta_\zeta$ is the Dirac delta measure centered at the point $\zeta$, will be referred to as its spectral measure.
\end{definition}

\noindent
As in \cite{dirac1,dirac2}, it follows that for every $j\in\mathbb Z$, $C_j(V,U)\ge0$ and the rank of $C_j(V,U)$ equals the multiplicity of the eigenvalue $\zeta_j(V,U)$.

We now state a connection between the operators $T_{q,U}$ and $S_{V,U}$:

\begin{lemma}\label{reduction}
For an arbitrary $q\in\mathcal Q_p$ and $U\in\mathcal U_{2r}$, the operator $T_{q,U}$ is unitarily equivalent to the operator $S_{V,U}$, where
\begin{equation}\label{Q}
V(x)=\begin{pmatrix}0&q(x)\\q(-x)^*&0\end{pmatrix},\qquad x\in(0,1).
\end{equation}
Moreover, the spectral data of the operator $T_{q,U}$ coincide with the spectral data of the operator $S_{V,U}$ with $V$ given by formula (\ref{Q}).
\end{lemma}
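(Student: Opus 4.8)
The plan is to realize the reduction through an explicit \emph{folding} isometry that unwraps $(-1,1)$ onto $(0,1)$ while doubling the number of components. Writing $y=(y_1,y_2)^\top$ with $y_1,y_2\in\mathbb C^r$, I would define $W:\mathcal H\to\mathbb H$ by
\begin{equation}\nonumber
(Wy)(x)=\bigl(y_1(x),\,y_2(-x),\,y_1(-x),\,y_2(x)\bigr)^\top,\qquad x\in(0,1),
\end{equation}
and check at once that $W$ is unitary: the substitution $s=-x$ turns the $(0,1)$-integrals of the reflected components into integrals over $(-1,0)$, so that $\|Wy\|_{\mathbb H}=\|y\|_{\mathcal H}$, and $W$ is clearly onto. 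The ordering of the four components is not free: it is forced by the requirement that the constant coefficient of the derivative be exactly $\boldsymbol J=\frac1\i\mathrm{diag}(I_{2r},-I_{2r})$. The naive reflection $y(\pm x)$ produces the coefficient $\frac1\i\mathrm{diag}(I_r,-I_r,-I_r,I_r)$, which has the wrong block structure; collecting the components with derivative coefficient $+\frac1\i$ into the top half and those with $-\frac1\i$ into the bottom half is precisely what dictates the permutation above, and conjugation by a merely block-diagonal unitary cannot achieve this since $\boldsymbol J$ has a scalar lower block.

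Next I would establish the intertwining $WT_{q,U}=S_{V,U}W$ with $V$ as in (\ref{Q}). This is a termwise change of variables: for $y\in D(T_q)$ and $g=\mathfrak t_q(y)$ one has $g(-x)=Jy'(-x)+Q(-x)y(-x)$, and since $\frac{\d}{\d x}\,y(-x)=-y'(-x)$ the reflected half acquires the opposite derivative coefficient; feeding this through $W$ reproduces precisely $\boldsymbol J(Wy)'+\boldsymbol V\,(Wy)$ once the components are in the fixed order, the surviving off-diagonal entries being exactly the blocks $q(x)$ and $q(-x)^*$ of $V$. The delicate point is the correspondence of domains. At the glued endpoint $x=0$ both halves of $Wy$ equal $y(0)$, so the separated condition $f_1(0)=f_2(0)$ holds automatically and imposes nothing extra — the analytic reflection of the fact that $0$ is an interior point of $(-1,1)$. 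At $x=1$ the boundary values $y(\pm1)$ reassemble, through the projectors $P_1,P_2$, into $f_1(1)=P_1y(1)+P_2y(-1)$ and $f_2(1)=P_1y(-1)+P_2y(1)$, and I would verify that $f_1(1)=Uf_2(1)$ is equivalent to $A_Uy(-1)+B_Uy(1)=0$ with $A_U,B_U$ as in (\ref{AUBU}). I expect this to be the first genuine obstacle: it is pure bookkeeping, but one must track the block action of $U$ against $P_1,P_2$ with care, minding the sign convention built into (\ref{AUBU}), in order to land on exactly the parametrization by the \emph{same} $U$.

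Once the unitary equivalence is secured, the equality of eigenvalues $\lambda_j(q,U)=\zeta_j(V,U)$ with multiplicities is immediate, and the common enumeration $\lambda_0<0\le\lambda_1$ is preserved. It then remains to match the norming matrices, which is where the two \emph{different} definitions must be reconciled: $A_j(q,U)=B_j^{-1}P_j$ is built constructively from the Cauchy solution $Y_q$ and the Gram matrix $M_j=\frac12\int_{-1}^1Y_q^*Y_q\,\d s$, whereas $C_j(V,U)$ is the residue of the Weyl--Titchmarsh function of $S_{V,U}$. The bridge is that the normalization point $x=0$ of $Y_q$ is exactly the left endpoint of $(0,1)$, so $W$ carries each $Y_q(\cdot,\lambda)c$ to a solution of the folded system with controlled initial data; because $W$ is unitary, $c^*M_jc=\frac12\|Y_q(\cdot,\lambda_j)c\|_{\mathcal H}^2$ equals the corresponding $L_2(0,1)$-normalization of the folded eigenfunction, and $\mathcal E_j=\ker[A_UY_q(-1,\lambda_j)+B_UY_q(1,\lambda_j)]$ is identified with the eigenspace of $S_{V,U}$ at $\zeta_j$.

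Finally I would invoke the representation of the Weyl-function residue through solution normalizations established for the separated case in \cite{dirac1,dirac2}, so as to conclude $C_j(V,U)=B_j^{-1}P_j=A_j(q,U)$; the identity $A_jM_jA_j=A_j$ of Remark~\ref{NormMatrRem} is the algebraic shadow of this matching and can be used as a consistency check. I expect this last step — passing from the residue definition of $C_j$ to the constructive $B_j^{-1}P_j$, i.e. bridging the Herglotz/Weyl description of the spectral data with the solution-based one — to be the most technical part of the argument, more so even than the boundary-condition bookkeeping.
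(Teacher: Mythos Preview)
Your proposal is correct and follows the paper's route almost exactly: the same folding unitary (the paper's $\mathcal V$ is your $W$), the same direct verification of $\mathcal V T_{q,U}\mathcal V^{-1}=S_{V,U}$ and of the boundary-condition correspondence, and the same reliance on the eigenprojector formula $\mathscr P_j=\Phi_V(\zeta_j)C_j\Phi_V^*(\zeta_j)$ imported from \cite{dirac1,dirac2}. The one difference in emphasis is the final step: the paper does not invoke a separate residue-to-normalization formula, but instead shows directly that $\Psi_q(\lambda_j)A_j\Psi_q^*(\lambda_j)$ equals the eigenprojector $\mathcal P_j$ of $T_{q,U}$ --- it is self-adjoint, has range $\Psi_q(\lambda_j)\mathcal E_j=\ker(T_{q,U}-\lambda_j\mathcal I)$, and is idempotent \emph{precisely because} $A_jM_jA_j=A_j$ --- then transports via $\mathcal V$ (using $\Phi_V=\mathcal V\Psi_q$) and reads off $A_j=C_j$ from $\ker\Phi_V=\{0\}$ and $\Ran\Phi_V^*=\mathbb C^{2r}$. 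So the identity you label a mere ``consistency check'' is in fact the linchpin of the paper's argument for the norming-matrix equality.
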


It thus follows from Lemma~\ref{reduction} that every sequence $\mathfrak a\in\mathfrak A_p$ (see (\ref{Tsd})) is the spectral data of the operator $S_{V,U}$ with the potential $V$ of the form (\ref{Q}). We now extend the results of \cite{dirac2} in order to solve the inverse spectral problem for the operators $S_{V,U}$. For such operators, we shall give a complete description of the class
$$
\mathfrak B_p:=\{\mathfrak b_{V,U}\mid V\in\mathfrak Q_p,\ U\in\mathcal U_{2r}\}
$$
of the spectral data, show that the spectral data of the operator $S_{V,U}$ determine the potential $V$ and the unitary matrix $U$ uniquely and suggest how to find $V$ and $U$ from the spectral data.

\subsection{The inverse problem for the operators $S_{V,U}$}

In what follows, let
$$
\mathfrak a:=((\lambda_j,A_j))_{j\in\mathbb Z}
$$
stand for an \emph{arbitrary} sequence, where $(\lambda_j)_{j\in\mathbb Z}$ is a strictly increasing sequence of real numbers such that $\lambda_0<0\le\lambda_1$ and $A_j$, $j\in\mathbb Z$, are non-zero non-negative matrices in $\mathcal M_{2r}$.
We first give the necessary and sufficient conditions for a sequence $\mathfrak a$ to belong to the class $\mathfrak B_p$. In order to formulate these conditions, we need to introduce some preliminaries.

We start by describing the asymptotics of $(\lambda_j)_{j\in\mathbb Z}$ and $(A_j)_{j\in\mathbb Z}$. The description will be much clearer after the following remark:

\begin{remark}\rm
Let $U\in\mathcal U_{2r}$ and $\gamma_1<\gamma_2<\ldots<\gamma_s$ be real numbers from the interval $[0,\pi)$ such that $\e^{2\i\gamma_k}$, $k=1,\ldots,s$, are all distinct eigenvalues of $U$. Then all distinct eigenvalues of the free operator $S_{0,U}$ take the form
\begin{equation}\label{FreeLambda}
\zeta_{ns+k}^0=\gamma_k+\pi n,\qquad k\in\{1,\ldots,s\},\ n\in\mathbb Z.
\end{equation}
The norming matrix of $S_{0,U}$ corresponding to the eigenvalue $\zeta_{ns+k}^0$ appears to be the orthogonal projector onto $\ker(U-\e^{2\i\gamma_k}I_{2r})$.
\end{remark}

\begin{definition}
We say that a sequence $\mathfrak a$ satisfies the condition $(C_1)$ if:
\begin{itemize}
\item[$(i)$]there exist real numbers $\gamma_1<\gamma_2<\ldots<\gamma_s$ from the interval $[0,\pi)$ such that with the numbers $\zeta_m^0$ of (\ref{FreeLambda}), $m\in\mathbb Z$, it holds
    \begin{equation}\label{LambdaAsymp1}
    \sum_{\lambda_j\in\Delta_m}|\lambda_j-\zeta_m^0|=\o(1),\qquad |m|\to\infty,
    \end{equation}
    and
    \begin{equation}\label{LambdaAsymp2}
    \sup_{m\in\mathbb Z}\sum_{\lambda_j\in\Delta_m}1<\infty,
    \end{equation}
    where
    $$
    \Delta_m:=\left[ \frac{\zeta_{m-1}^0+\zeta_m^0}{2},\ \frac{\zeta_m^0+\zeta_{m+1}^0}{2} \right);
    $$
\item[$(ii)$]there exist pairwise orthogonal projectors $P_1^0,\ldots,P_s^0\in\mathcal M_{2r}$ such that $$\sum_{k=1}^s P_k^0=I_{2r}$$ and for every $k\in\{1,\ldots,s\}$,
    \begin{equation}\label{AlphaAsymp}
    \left\|P_k^0-\sum_{\lambda_j\in\Delta_{ns+k}} A_j\right\|=\o(1),\qquad |n|\to\infty.
    \end{equation}
\end{itemize}
\end{definition}

\noindent
For every sequence $\mathfrak a$ satisfying the condition $(C_1)$, we define the unitary matrix $U_{\mathfrak a}\in\mathcal U_{2r}$ by the formula
\begin{equation}\label{UaDef}
U_{\mathfrak a}:=\sum_{k=1}^s \e^{2\i\gamma_k} P_k^0.
\end{equation}

Next, we denote by $\mu^{\mathfrak a}$ the $2r\times2r$ matrix-valued measure given by the formula
\begin{equation}\label{MuAdef}
\mu^{\mathfrak a}:=\sum_{j\in\mathbb Z} A_j\delta_{\lambda_j}
\end{equation}
and associate with $\mu:=\mu^{\mathfrak a}$ the $\mathbb C^{2r}$-valued distribution defined via
$$
(\mu,f):=\int\limits_{\mathbb R} f \ \d\mu,\qquad f\in\mathcal S^{2r},
$$
where $\mathcal S^{2r}$ is the Schwartz space of rapidly decreasing $\mathbb C^{2r}$-valued functions (see Appendix~\ref{AppSpaces}). As in \cite{dirac2}, we introduce a Fourier-type transform of $\mu^{\mathfrak a}$:

\begin{definition}
For an arbitrary measure $\mu:=\mu^{\mathfrak a}$, we denote by $\widehat\mu$ the $\mathbb C^{2r}$-valued distribution given by the formula
$$
(\widehat\mu,f):=(\mu,\widehat f),\qquad f\in\mathcal S^{2r},
$$
where $\widehat f(\lambda):=\int_{-\infty}^\infty \e^{2\i\lambda s}f(s) \d s$, $\lambda\in\mathbb R$.
\end{definition}

For an arbitrary sequence $\mathfrak a$ satisfying the condition $(C_1)$, set $\mu:=\mu^{\mathfrak a}$ and let $H_\mu$ be the restriction of the distribution $\widehat\mu-\widehat{\mu_0}$ to the interval $(-1,1)$, i.e.,
\begin{equation}\label{HMuDef}
(H_\mu,f):=(\widehat\mu-\widehat{\mu_0},f),\qquad f\in\mathcal S^{2r},\ \supp f\subset(-1,1),
\end{equation}
where $\mu_0:=\mu_{0,U}$ is the spectral measure of the free operator $S_{0,U}$, $U:=U_{\mathfrak a}$. Then the following lemma gives the necessary and sufficient conditions for a sequence $\mathfrak a$ to belong to the class $\mathfrak B_p$:

\begin{lemma}\label{prop1}
A sequence $\mathfrak a$ belongs to the class $\mathfrak B_p$, $p\in[1,\infty)$, if and only if it satisfies the condition $(C_1)$ and
\begin{itemize}
\item[$(C_2)$]there exists $n_0\in\mathbb N$ such that for all natural $n>n_0$,
    $$
    \sum_{m=-ns+1}^{ns}\ \sum_{\lambda_j\in\Delta_m}\rank A_j=4nr;
    $$
\item[$(C_3)$]the system of functions $\{ \e^{\i\lambda_j t}v\mid j\in\mathbb Z,\ v\in\mathrm{Ran}\ A_j \}$ is complete in $L_2((-1,1),\mathbb C^{2r})$;
\item[$(C_4)$]the distribution $H_\mu$, where $\mu:=\mu^{\mathfrak a}$, belongs to $L_p((-1,1),\mathcal M_{2r})$.
\end{itemize}
\end{lemma}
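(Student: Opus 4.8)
The plan is to establish the two implications of the equivalence separately --- the necessity of conditions $(C_1)$--$(C_4)$ for membership in $\mathfrak B_p$ and their sufficiency --- with the \emph{accelerant} of the operator $S_{V,U}$ (in the sense of \cite{dirac2}) serving as the central object throughout. The guiding principle, announced in the introduction, is that the accelerant depends only on the potential $V$ and not on the boundary matrix $U$; consequently $U$ must be recovered entirely from the asymptotics of the spectral data, which is precisely what condition $(C_1)$ encodes. For the necessity direction I would start with an operator $S_{V,U}$ and analyze the asymptotic behaviour of its eigenvalues $\zeta_j$ and norming matrices $C_j$ by comparison with the free operator $S_{0,U}$, treating $\boldsymbol V$ as a compact perturbation and using the summability $V\in\mathfrak Q_p$. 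This should show that the eigenvalues cluster about the free values $\zeta_m^0=\gamma_k+\pi n$ and that the partial sums of the norming matrices over each cluster tend to the spectral projectors $P_k^0$ of $U$ onto $\ker(U-\e^{2\i\gamma_k}I_{2r})$, yielding $(C_1)$ and the identification $U=U_{\mathfrak a}$ through (\ref{UaDef}). The counting identity $(C_2)$ then follows from conservation of total multiplicity: over $2n$ periods the total rank of the norming matrices equals the number of eigenvalues counted with multiplicity, which is stable under the perturbation and, for the free operator, equals $\sum_{k=1}^s\rank P_k^0=2r$ per period, i.e.\ $4nr$ in all. Completeness $(C_3)$ is a consequence of self-adjointness: the eigenfunctions of $S_{V,U}$ span $\mathbb H$, and transporting this through the transformation operator $I+K$ that links $S_{V,U}$ to $S_{0,U}$ converts it into completeness of the exponential system $\{\e^{\i\lambda_j t}v\mid v\in\Ran A_j\}$ in $L_2((-1,1),\mathbb C^{2r})$.

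The crux of the necessity direction is $(C_4)$. Here I would prove that the distribution $H_\mu$ assembled from the spectral data by the Fourier-type transform coincides with the genuine accelerant of the operator. Concretely, one forms $\widehat{\mu_{V,U}}-\widehat{\mu_{0,U}}$ and restricts to $(-1,1)$; the principal, $U$-dependent contributions cancel in the difference --- this is where the $U$-independence of the accelerant is established --- leaving a function that is the accelerant of $V$ and that lies in $L_p(-1,1)$ precisely because $V\in\mathfrak Q_p$. This step relies on the sharp asymptotic estimates furnished by $(C_1)$ to control the convergence of the series defining $\widehat\mu$.

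For the sufficiency direction I would reverse the chain. Given a sequence $\mathfrak a$ satisfying $(C_1)$--$(C_4)$, condition $(C_1)$ produces the matrix $U_{\mathfrak a}$ of (\ref{UaDef}) and $(C_4)$ guarantees $H_\mu\in L_p(-1,1)$. The task is then to show that $H_\mu$ is a \emph{genuine} accelerant, i.e.\ that the associated Krein integral equation is uniquely solvable; here $(C_2)$ and $(C_3)$ enter, supplying the non-degeneracy (positivity and invertibility) that the Krein method requires. The $U$-independent reconstruction machinery of \cite{dirac2} then recovers a potential $V\in\mathfrak Q_p$ from $H_\mu$. Finally I would verify that $S_{V,U_{\mathfrak a}}$ has spectral data exactly $\mathfrak a$: by construction its accelerant is $H_\mu$ and its boundary matrix is $U_{\mathfrak a}$, so the uniqueness of the accelerant-to-spectral-data correspondence (together with the asymptotics fixing $U_{\mathfrak a}$) forces its eigenvalues and norming matrices to be the prescribed ones.

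The main obstacle I anticipate is the identification in $(C_4)$ --- proving that the formal Fourier transform of the spectral-measure difference is an $L_p$ function equal to the accelerant --- and, on the sufficiency side, establishing that $(C_2)$ and $(C_3)$ force the unique solvability of the Krein equation, so that $H_\mu$ truly qualifies as an accelerant. Both amount to translating the abstract spectral-data conditions into the analytic positivity underlying the Krein accelerant method, and this translation is the technical heart of the argument.
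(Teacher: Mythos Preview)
Your overall architecture is correct and matches the paper's: the accelerant is shown to be $U$-independent (this is the content of the paper's Lemma~4.1, which identifies $H_\mu$ with $H_\nu$ for $\nu$ the spectral measure of $S_{V,I}$), necessity of $(C_1)$ comes from Rouch\'e-type asymptotics, and $(C_3)$, $(C_4)$ follow from the positivity of $\mathscr I+\mathscr F_H$ and the known accelerant theory of \cite{dirac2}. So the necessity direction is essentially right.

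However, the sufficiency direction has a genuine gap, and it is precisely in the step you flag as the ``main obstacle'' but then resolve too quickly. You write that once $V=\Theta(H_\mu)$ and $U=U_{\mathfrak a}$ are constructed, ``the uniqueness of the accelerant-to-spectral-data correspondence \dots\ forces its eigenvalues and norming matrices to be the prescribed ones.'' This is circular: uniqueness (Lemma~\ref{prop2}) tells you that distinct pairs $(V,U)$ give distinct spectral data, but it does \emph{not} tell you that the abstract sequence $\mathfrak a$ is the spectral data of anything. You must show directly that $\mathfrak b_{V,U_{\mathfrak a}}=\mathfrak a$, and no uniqueness statement can substitute for this.

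Relatedly, you misidentify the role of $(C_2)$. It is \emph{not} needed for the Krein equation to be solvable; by \cite[Lemma~4.2]{dirac2}, conditions $(C_3)$ and $(C_4)$ alone force $H_\mu\in\mathfrak H_p$. The paper uses $(C_2)$ for something different and essential: it is equivalent (Proposition~4.2) to the statement that the operators $\mathscr P_{\mathfrak a,j}:=\Phi_V(\lambda_j)A_j\Phi_V^*(\lambda_j)$ are \emph{pairwise orthogonal projectors}. This orthogonality, combined with an integration-by-parts identity, yields
\[
A_j\,\varphi_V(1,\lambda_j)^*\boldsymbol J\,\varphi_V(1,\lambda_i)A_i=0,\qquad i\neq j,
\]
and then a limit along $j$ using the asymptotics $(C_1)$ recovers the boundary row $b_U$ and shows $b_{U_{\mathfrak a}}\varphi_V(1,\lambda_i)A_i=0$. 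That is exactly the statement that $\Ran\mathscr P_{\mathfrak a,i}\subset\ker(S_{V,U_{\mathfrak a}}-\lambda_i\mathscr I)$; completeness then upgrades the inclusions to equalities, so the $\mathscr P_{\mathfrak a,j}$ coincide with the eigenprojectors of $S_{V,U_{\mathfrak a}}$ and hence $\mathfrak a=\mathfrak b_{V,U_{\mathfrak a}}$. This projector-and-limit argument is the missing mechanism in your sufficiency sketch.
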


By definition, every sequence $\mathfrak a\in\mathfrak B_p$ is the spectral data of some operator $S_{V,U}$. It turns out that the operator $S_{V,U}$ is determined by its spectral data uniquely:

\begin{lemma}\label{prop2}
For every $p\in[1,\infty)$, the mapping $\mathfrak Q_p\times\mathcal U_{2r}\owns(V,U)\mapsto\mathfrak b_{V,U}\in\mathfrak B_p$ is bijective.
\end{lemma}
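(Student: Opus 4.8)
The plan is to prove injectivity, since surjectivity of the map onto $\mathfrak B_p$ holds by the very definition of $\mathfrak B_p$ as the image $\{\mathfrak b_{V,U}\mid V\in\mathfrak Q_p,\ U\in\mathcal U_{2r}\}$. Thus it suffices to show that the spectral data $\mathfrak b_{V,U}$ determine the pair $(V,U)$ uniquely; equivalently, I would exhibit two well-defined maps $\mathfrak B_p\to\mathcal U_{2r}$ and $\mathfrak B_p\to\mathfrak Q_p$ recovering $U$ and $V$ respectively, and check that they invert $(V,U)\mapsto\mathfrak b_{V,U}$.

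First I would recover the unitary matrix $U$ from the asymptotics of the spectral data. Fix $(V,U)$ and put $\mathfrak a:=\mathfrak b_{V,U}$; by Lemma~\ref{prop1} the sequence $\mathfrak a$ satisfies $(C_1)$, so the numbers $\gamma_1<\dots<\gamma_s$ in $[0,\pi)$ and the pairwise orthogonal projectors $P_1^0,\dots,P_s^0$ are determined by $\mathfrak a$ through the limits (\ref{LambdaAsymp1}) and (\ref{AlphaAsymp}). The point to verify is that these asymptotic data are precisely those of the free operator $S_{0,U}$: since $V$ is summable, $S_{V,U}$ is a relatively compact perturbation of $S_{0,U}$ and the leading asymptotics of its eigenvalues and norming matrices coincide with the free ones, namely the $\e^{2\i\gamma_k}$ are the distinct eigenvalues of $U$ and $P_k^0$ is the orthogonal projector onto $\ker(U-\e^{2\i\gamma_k}I_{2r})$. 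Consequently $U_{\mathfrak a}=\sum_{k=1}^s\e^{2\i\gamma_k}P_k^0=U$ by (\ref{UaDef}) and the spectral decomposition of $U$, so $U$ is uniquely recovered from $\mathfrak a$.

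Next, with $U=U_{\mathfrak a}$ now known, I would recover $V$ by the Krein accelerant method. Forming the measure $\mu:=\mu^{\mathfrak a}$ of (\ref{MuAdef}) and subtracting the free spectral measure $\mu_0=\mu_{0,U}$, condition $(C_4)$ guarantees that the accelerant $H_\mu\in L_p((-1,1),\mathcal M_{2r})$ of (\ref{HMuDef}) is well defined. The crucial structural fact, to be established in the reconstruction analysis underlying Lemma~\ref{prop1} and as in \cite{dirac2}, is that $H_{\mu_{V,U}}$ is independent of the boundary condition $U$ and coincides with the accelerant of the potential $V$ alone. I would then invoke the bijectivity of the correspondence between accelerants and potentials furnished by the Krein method: $V$ is the unique solution of the associated Krein equation whose kernel is built from $H_\mu$. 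Hence $H_{\mu_{V_1,U}}=H_{\mu_{V_2,U}}$ forces $V_1=V_2$, and $V$ is uniquely recovered from $\mathfrak a$.

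Combining the two steps, if $\mathfrak b_{V_1,U_1}=\mathfrak b_{V_2,U_2}$ then $U_1=U_{\mathfrak a}=U_2$ and, with this common $U$, $V_1=V_2$; this proves injectivity and completes the proof. The main obstacle is the second step: showing that the accelerant $H_\mu$ does not depend on $U$ and that the passage $V\mapsto H_\mu$ is injective. This is the heart of the Krein accelerant method and is where the real work lies, since one must express $H_\mu$ through the transformation operator associated with the Cauchy problem (\ref{MainCauchyProbl}), verify that the boundary-condition contribution cancels upon restricting to $(-1,1)$, and establish the unique solvability of the Krein equation that reconstructs $V$ from $H_\mu$. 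The recovery of $U$ in the first step, by contrast, is a comparatively routine perturbation-asymptotics argument.
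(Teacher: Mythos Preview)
Your proposal is correct and follows essentially the same approach as the paper. The paper actually omits the proof of this lemma, saying only that it ``repeats the proof of Theorem~1.3 in \cite{dirac2}'', but the ingredients you identify are precisely those the paper assembles elsewhere: Lemma~\ref{AccLemma} shows $H_{\mu_{V,U}}=H_{\mu_{V,I}}$ is independent of $U$, Appendix~\ref{AppKrein} records that the Krein mapping $\Theta:\mathfrak H_p\to\mathfrak Q_p$ is a homeomorphism (hence injective), and Lemma~\ref{prop3} packages these into the explicit inverse $V=\Theta(H_\mu)$, $U=U_{\mathfrak a}$, from which injectivity is immediate just as you outline.
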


\noindent
We then solve the inverse problem of finding the operator $S_{V,U}$ from its spectral data. As in \cite{dirac1,sturm,dirac2}, we base our procedure on Krein's accelerant method:

\begin{lemma}\label{prop3}
Let $\mathfrak a\in\mathfrak B_p$ be a putative spectral data of the operator $S_{V,U}$. Set $\mu:=\mu^{\mathfrak a}$ by formula (\ref{MuAdef}) and $H:=H_\mu$ by formula (\ref{HMuDef}). Then $H\in\mathfrak H_p$ (see Appendix~\ref{AppKrein}) and
$$
V=\Theta(H),\qquad U=U_{\mathfrak a},
$$
where $\Theta:\mathfrak H_p\to\mathfrak Q_p$ is the Krein mapping given by formula (\ref{ThetaDef}) and $U_{\mathfrak a}\in\mathcal U_{2r}$ is given by formula (\ref{UaDef}).
\end{lemma}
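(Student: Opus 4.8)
The plan is to invoke the definition of $\mathfrak B_p$ to fix a pair $(V,U)\in\mathfrak Q_p\times\mathcal U_{2r}$ with $\mathfrak a=\mathfrak b_{V,U}$, and then to verify that the reconstruction procedure returns exactly this pair. Recovering $U$ is the easier half. Since $\mathfrak a\in\mathfrak B_p$, the necessity part of Lemma~\ref{prop1} shows that $\mathfrak a$ satisfies $(C_1)$, so the numbers $\gamma_1<\dots<\gamma_s$ and the orthogonal projectors $P_1^0,\dots,P_s^0$ are well defined and $U_{\mathfrak a}$ is given by (\ref{UaDef}). Because $\boldsymbol V$ is a summable off-diagonal perturbation of the free expression, the direct spectral analysis of \cite{dirac2} gives that the eigenvalues $\zeta_j(V,U)$ and the norming matrices $C_j(V,U)$ are asymptotically close to those of the free operator $S_{0,U}$, which are described in the Remark in Sect.~\ref{results}. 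Matching the asymptotics (\ref{LambdaAsymp1}) and (\ref{AlphaAsymp}) with the free data (\ref{FreeLambda}) then forces $\{\e^{2\i\gamma_k}\}_{k=1}^s$ to be the spectrum of $U$ and each $P_k^0$ to be the orthogonal projector onto $\ker(U-\e^{2\i\gamma_k}I_{2r})$; the spectral theorem yields $U=\sum_{k=1}^s\e^{2\i\gamma_k}P_k^0=U_{\mathfrak a}$.

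The substantial part is to prove simultaneously that $H_\mu\in\mathfrak H_p$ and $V=\Theta(H_\mu)$. I would introduce the transformation operator of $S_{V,U}$, i.e.\ the Volterra operator carrying the solutions of the free system into those of $\mathfrakS_V Y=\zeta Y$ normalised at $x=0$, and denote by $H_V$ the accelerant built from its kernel as in \cite{dirac2}. The forward Krein theory developed there yields $H_V\in\mathfrak H_p$ and $\Theta(H_V)=V$; crucially, since the transformation operator is determined by $V$ alone through a Volterra integral equation and the normalisation at $x=0$, its kernel --- and hence the accelerant $H_V$ --- does \emph{not} depend on the boundary matrix $U$. It therefore suffices to prove the spectral identity
\begin{equation}\nonumber
H_\mu=H_V .
\end{equation}

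To establish this identity I would start from the explicit representation of the Weyl--Titchmarsh function $M_{V,U}$ through the fundamental matrix of $\mathfrakS_V$ and the boundary data at $x=1$ carried by $U$, read off the residues $C_j=-\res_{\zeta=\zeta_j}M_{V,U}$ as in Definition~\ref{SDSQU}, and assemble the spectral measure $\mu=\mu_{V,U}=\sum_j C_j\delta_{\zeta_j}$. The eigenfunction expansion (Parseval equality) for the self-adjoint operator $S_{V,U}$ represents $\mu$ as its spectral measure, while $\mu_0=\mu_{0,U}$ plays the same role for $S_{0,U}$. Applying the Fourier-type transform $\widehat\mu$ and testing against $f\in\mathcal S^{2r}$ with $\supp f\subset(-1,1)$, I would compute $(\widehat\mu-\widehat{\mu_0},f)$ and, after inserting the high-energy asymptotics and the Herglotz structure of $M_{V,U}$, recognise that the $U$-dependent contributions --- those coming from the endpoint $x=1$ and from the global normalisation of the eigenfunctions --- cancel in the difference, leaving precisely the kernel of the transformation operator restricted to $(-1,1)$. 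This is exactly $H_\mu=H_V$, whence $H_\mu\in\mathfrak H_p$ and $\Theta(H_\mu)=\Theta(H_V)=V$.

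The main obstacle is precisely this cancellation: although $\mu_{V,U}$ and the individual matrices $C_j$ genuinely depend on $U$, the regularised transform $\widehat\mu-\widehat{\mu_0}$ restricted to $(-1,1)$ must be shown to be $U$-independent and to coincide with the accelerant of the potential. This requires sharp control of $M_{V,U}$ as $|\Im\zeta|\to\infty$ and a justification that the distributional transform of the series $\sum_j C_j\delta_{\zeta_j}$ is well defined and may be manipulated termwise once the free measure $\mu_0$ is subtracted. Conditions $(C_1)$ and $(C_2)$ are exactly what make these manipulations legitimate: $(C_1)$ controls the location of the $\zeta_j$ and the defect of the $C_j$ from the free projectors, $(C_2)$ fixes the correct total rank so that the subtracted series converges, while $(C_3)$ guarantees that the expansion is a genuine Parseval equality and $(C_4)$ delivers the $L_p$ regularity needed for $H_\mu\in\mathfrak H_p$. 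Once the identity $H_\mu=H_V$ is secured, all assertions of the lemma follow from the forward theory of \cite{dirac2} together with the identification $U=U_{\mathfrak a}$ obtained above.
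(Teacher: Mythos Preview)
Your overall strategy is correct and matches the paper's in spirit: fix $(V,U)$ with $\mathfrak a=\mathfrak b_{V,U}$, recover $U$ from the asymptotics encoded in $(C_1)$, and show that $H_\mu$ coincides with the ($U$-independent) accelerant determined by $V$ through the transformation operator, whence $\Theta(H_\mu)=V$. The identification $U=U_{\mathfrak a}$ via the asymptotics of eigenvalues and norming matrices is exactly what the paper does.

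Where your proposal diverges from the paper is in the mechanism for proving $H_\mu=H_V$. You propose to go through the Weyl--Titchmarsh function, its Herglotz representation, and high-energy asymptotics, and then argue that the $U$-dependent contributions cancel. This remains vague---you yourself flag it as ``the main obstacle''---and it is not clear how the transformation-operator kernel would emerge from such an argument. The paper's route is much more direct and uses precisely the two ingredients you already named, Parseval and the transformation operator, but combines them \emph{algebraically} rather than asymptotically. From the projector formula $\mathscr P_j=\Phi_V(\zeta_j)C_j\Phi_V^*(\zeta_j)$ and the resolution of identity $\sum_j\mathscr P_j=\mathscr I$, together with the factorization $\Phi_V(\zeta)=(\mathscr I+\mathscr K_V)\Phi_0(\zeta)$, one obtains
\[
\sum_{j}\Phi_0(\zeta_j)\,C_j\,\Phi_0^*(\zeta_j)=(\mathscr I+\mathscr K_V)^{-1}(\mathscr I+\mathscr K_V^*)^{-1},
\]
whose right-hand side manifestly depends only on $V$. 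By \cite{dirac2} this equals $\mathscr I+\mathscr F_{H_\nu}$ with $\nu=\mu_{V,I}$; a unitary change of variables then identifies $H_\mu$ with $H_\nu$ on $(-1,1)$ (this is Lemma~\ref{AccLemma} in the paper), and $\Theta(H_\nu)=V$ follows from the forward theory. No Weyl-function asymptotics or Herglotz-type arguments enter at all.

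A minor point: your final paragraph assigning specific roles to $(C_1)$--$(C_4)$ is off-target for this lemma. Since $\mathfrak a\in\mathfrak B_p$ already means $\mathfrak a=\mathfrak b_{V,U}$ for a genuine self-adjoint operator $S_{V,U}$, Parseval and the convergence of the relevant series are automatic; the conditions $(C_1)$--$(C_4)$ enter here only as \emph{consequences} (the necessity part of Lemma~\ref{prop1}), not as hypotheses needed to legitimise your manipulations. In the paper these conditions are used as hypotheses only in the converse direction---showing that an abstract sequence satisfying $(C_1)$--$(C_4)$ is the spectral data of $S_{\Theta(H_\mu),\,U_{\mathfrak a}}$---which simultaneously yields the sufficiency in Lemma~\ref{prop1} and the statement of Lemma~\ref{prop3}.
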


\noindent
The function $H:=H_\mu$, where $\mu:=\mu^{\mathfrak a}$ and $\mathfrak a\in\mathfrak B_p$ is the spectral data of the operator $S_{V,U}$, will be called the \emph{accelerant} of the operator $S_{V,U}$.

\subsection{The inverse problem for the operators $T_{q,U}$}

We now use the results of the previous subsection to solve the inverse spectral problem for the operators $T_{q,U}$.

Recall that by virtue of Lemma~\ref{prop3}, for every sequence $\mathfrak a$ satisfying the conditions $(C_1)-(C_4)$ from Lemma~\ref{prop1} the distribution $H:=H_\mu$, where $\mu:=\mu^{\mathfrak a}$, appears to be an accelerant and belongs to the class $\mathfrak H_p$.
Then the following theorem gives a complete description of the class $\mathfrak A_p$ of the spectral data of the operators $T_{q,U}$:

\begin{theorem}\label{Th1}
A sequence $\mathfrak a$ belongs to the class $\mathfrak A_p$, $p\in[1,\infty)$, if and only if it satisfies the conditions $(C_1)-(C_4)$ from Lemma~\ref{prop1} and
\begin{itemize}
\item[$(C_5)$]the function $V=\Theta(H)$, where $H:=H_\mu$, $\mu:=\mu^{\mathfrak a}$ and $\Theta:\mathfrak H_p\to\mathfrak Q_p$ is the Krein mapping given by formula (\ref{ThetaDef}), satisfies the anti-commutative relation
    $$
    V(x)J=-JV(x)
    $$
    a.e. on $(0,1)$.
\end{itemize}
\end{theorem}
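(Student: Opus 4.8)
The plan is to identify $\mathfrak{A}_p$ with the subset of $\mathfrak{B}_p$ corresponding, under the bijection of Lemma~\ref{prop2}, to potentials $V$ of the special form (\ref{Q}), and then to show that this structural condition on $V$ is exactly the anti-commutation relation $(C_5)$. Lemma~\ref{reduction} already tells us that every $\mathfrak{a}\in\mathfrak{A}_p$ is the spectral data of an operator $S_{V,U}$ whose potential $V$ has the form (\ref{Q}); conversely, if $\mathfrak{a}\in\mathfrak{B}_p$ corresponds to a $V$ of this form, then reading (\ref{Q}) backwards recovers a potential $q\in\mathcal{Q}_p$, and Lemma~\ref{reduction} identifies $\mathfrak{a}$ as the spectral data of $T_{q,U}$. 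Thus $\mathfrak{a}\in\mathfrak{A}_p$ precisely when $\mathfrak{a}\in\mathfrak{B}_p$ and the associated $V$ has the form (\ref{Q}). Since Lemma~\ref{prop1} says that membership in $\mathfrak{B}_p$ is exactly $(C_1)-(C_4)$, and Lemma~\ref{prop3} tells us that the associated potential is $V=\Theta(H)$ with $H:=H_\mu$, everything reduces to the claim that, for this reconstructed $V$, the form (\ref{Q}) is equivalent to $(C_5)$.

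That remaining claim is an elementary block computation. Writing $V$ in $r\times r$ blocks as $V=\begin{pmatrix}v_{11}&v_{12}\\v_{21}&v_{22}\end{pmatrix}$ and using $J=\frac1\i\begin{pmatrix}I&0\\0&-I\end{pmatrix}$, a direct multiplication shows that $VJ=-JV$ holds if and only if $v_{11}=v_{22}=0$, i.e.\ if and only if $V$ is block-off-diagonal. It then suffices to note that the block-off-diagonal potentials in $\mathfrak{Q}_p$ are precisely those of the form (\ref{Q}): given $V=\begin{pmatrix}0&v_{12}\\v_{21}&0\end{pmatrix}$ one sets $q(x):=v_{12}(x)$ for $x\in(0,1)$ and $q(x):=v_{21}(-x)^*$ for $x\in(-1,0)$, obtaining a $q\in\mathcal{Q}_p$ for which (\ref{Q}) returns the given $V$, while conversely every $V$ of the form (\ref{Q}) is manifestly block-off-diagonal. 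Hence $(C_5)$ is exactly the condition that the reconstructed $V$ arises from some $q\in\mathcal{Q}_p$ via (\ref{Q}).

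Assembling these facts yields both implications at once. If $\mathfrak{a}\in\mathfrak{A}_p$, then $(C_1)-(C_4)$ hold by Lemmas~\ref{reduction} and \ref{prop1}, and the potential furnished by Lemma~\ref{prop3} is block-off-diagonal, which is $(C_5)$. Conversely, if $(C_1)-(C_5)$ hold, then Lemma~\ref{prop1} places $\mathfrak{a}$ in $\mathfrak{B}_p$, Lemma~\ref{prop3} identifies its potential as $V=\Theta(H)$, condition $(C_5)$ forces $V$ into the form (\ref{Q}), and Lemma~\ref{reduction} exhibits $\mathfrak{a}$ as the spectral data of the corresponding $T_{q,U}$. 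I expect no serious obstacle here: the analytic content is carried entirely by Lemmas~\ref{reduction}, \ref{prop1}, \ref{prop2}, and \ref{prop3}, and the only new ingredient is the trivial computation identifying $(C_5)$ with the block structure (\ref{Q}). The single point needing care is invoking the bijectivity of Lemma~\ref{prop2} to legitimately pass between $\mathfrak{a}$ and the uniquely determined pair $(V,U)$, so that a condition on $V$ can be read directly off $\mathfrak{a}$.
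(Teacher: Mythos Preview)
Your proposal is correct and follows essentially the same route as the paper's proof: both directions are obtained by combining Lemma~\ref{reduction} with Lemmas~\ref{prop1}--\ref{prop3}, together with the elementary observation (packaged in the paper as Remark~\ref{AssocRem}) that $VJ=-JV$ is equivalent to $V$ having the block-off-diagonal form~(\ref{Q}). The only cosmetic difference is that you spell out the block computation explicitly, whereas the paper isolates it in a preliminary remark.
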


By definition, every sequence $\mathfrak a\in\mathfrak A_p$ is the spectral data of some operator $T_{q,U}$. It turns out that the operator $T_{q,U}$ is determined by its spectral data uniquely:

\begin{theorem}\label{Th2}
For every $p\in[1,\infty)$, the mapping $\mathcal Q_p\times\mathcal U_{2r}\owns(q,U)\mapsto\mathfrak a_{q,U}\in\mathfrak A_p$ is bijective.
\end{theorem}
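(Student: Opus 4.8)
The plan is to deduce the statement from the reduction to the auxiliary operators $S_{V,U}$ (Lemma~\ref{reduction}) together with the bijectivity already established in Lemma~\ref{prop2}. First observe that, by its very definition (\ref{Tsd}), the set $\mathfrak A_p$ is the image of the mapping $(q,U)\mapsto\mathfrak a_{q,U}$; hence surjectivity onto $\mathfrak A_p$ holds by construction and carries no content. The whole theorem therefore reduces to proving injectivity.

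To organize the argument I would introduce the auxiliary map $\Phi:\mathcal Q_p\to\mathfrak Q_p$ sending $q$ to the potential $V$ given by formula (\ref{Q}), and record two elementary properties. It is well defined, since the entries of $V(x)=\begin{pmatrix}0&q(x)\\q(-x)^*&0\end{pmatrix}$ are, up to conjugation and relabelling, those of $q$ on $(0,1)$ and on $(-1,0)$, so that $V\in\mathfrak Q_p$ whenever $q\in\mathcal Q_p$. It is also injective: the $(1,2)$ block of $V$ returns $q$ on $(0,1)$ and the $(2,1)$ block returns $q(-\,\cdot\,)^*$, hence $q$ on $(-1,0)$, and together these recover $q$ a.e.\ on the whole of $(-1,1)$. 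Consequently $\Phi\times\mathrm{id}:\mathcal Q_p\times\mathcal U_{2r}\to\mathfrak Q_p\times\mathcal U_{2r}$ is injective.

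Next I would invoke Lemma~\ref{reduction}, whose content is precisely that $T_{q,U}$ is unitarily equivalent to $S_{\Phi(q),U}$ and, more importantly, that their spectral data coincide, i.e.\ $\mathfrak a_{q,U}=\mathfrak b_{\Phi(q),U}$. This exhibits the mapping of the theorem as the composition
$$
(q,U)\ \longmapsto\ (\Phi(q),U)\ \longmapsto\ \mathfrak b_{\Phi(q),U}=\mathfrak a_{q,U},
$$
in which the first arrow is $\Phi\times\mathrm{id}$ and the second is $(V,U)\mapsto\mathfrak b_{V,U}$. By Lemma~\ref{prop2} the second arrow is a bijection, in particular injective, and by the previous paragraph the first arrow is injective; thus the composition is injective. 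Concretely, if $\mathfrak a_{q_1,U_1}=\mathfrak a_{q_2,U_2}$ then $\mathfrak b_{\Phi(q_1),U_1}=\mathfrak b_{\Phi(q_2),U_2}$, so Lemma~\ref{prop2} forces $\Phi(q_1)=\Phi(q_2)$ and $U_1=U_2$, and injectivity of $\Phi$ then gives $q_1=q_2$. Together with the automatic surjectivity this proves the asserted bijectivity.

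Once Lemmas~\ref{reduction} and \ref{prop2} are available, the proof is essentially bookkeeping and I do not anticipate a genuine obstacle. The only point deserving care is that $\Phi$ must reconstruct $q$ on the full interval $(-1,1)$ rather than just on $(0,1)$; this is exactly why the reduction encodes the reflected block $q(-\,\cdot\,)^*$ in (\ref{Q}), and it is what makes $\Phi$ injective. One may additionally remark that the image of $\Phi$ is precisely the set of potentials satisfying the anticommutation relation of condition $(C_5)$ in Theorem~\ref{Th1}, so that $\Phi$ is in fact a bijection onto that set; this refinement is not needed here but clarifies the role of $(C_5)$.
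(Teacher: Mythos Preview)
Your proof is correct and follows essentially the same route as the paper: surjectivity is automatic from the definition of $\mathfrak A_p$, and injectivity is obtained by factoring $(q,U)\mapsto\mathfrak a_{q,U}$ through the associated operator $S_{V,U}$ via Lemma~\ref{reduction}, then invoking Lemma~\ref{prop2} together with the injectivity of $q\mapsto V$. The only cosmetic difference is that the paper packages the injectivity of your map $\Phi$ into Remark~\ref{AssocRem}, whereas you verify it directly from the block structure of~(\ref{Q}).
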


\noindent
We then solve the inverse problem of finding the operator $T_{q,U}$ from its spectral data:

\begin{theorem}\label{Th3}
Let $\mathfrak a\in\mathfrak A_p$ be a putative spectral data of the operator $T_{q,U}$. Set $\mu:=\mu^{\mathfrak a}$ by formula (\ref{MuAdef}), $H:=H_\mu$ by formula (\ref{HMuDef}) and $V:=\Theta(H)$. Then
\begin{equation}\label{qFromQ}
q(x)=\begin{cases}V_{12}(x),&x\in(0,1),\\
V_{21}(-x)^*,&x\in(-1,0),\end{cases}
\end{equation}
where $V=(V_{ij})_{i,j=1}^2$, and $U=U_{\mathfrak a}$, where $U_{\mathfrak a}\in\mathcal U_{2r}$ is given by formula (\ref{UaDef})
\end{theorem}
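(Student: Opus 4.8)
The plan is to reduce Theorem~\ref{Th3} entirely to the inverse theory for the separated-boundary operators $S_{V,U}$ developed in Lemmas~\ref{reduction}, \ref{prop2} and \ref{prop3}, and then simply to read off $q$ from the block structure of the reconstructed potential $V=\Theta(H)$.

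First I would unwind the definition of $\mathfrak A_p$: since $\mathfrak a\in\mathfrak A_p$, there exist $q\in\mathcal Q_p$ and $U\in\mathcal U_{2r}$ with $\mathfrak a=\mathfrak a_{q,U}$. Lemma~\ref{reduction} then applies verbatim, giving that $T_{q,U}$ is unitarily equivalent to $S_{V_0,U}$, where
$$V_0(x)=\begin{pmatrix}0&q(x)\\q(-x)^*&0\end{pmatrix},\qquad x\in(0,1),$$
and --- the part I will actually use --- that their spectral data coincide, so $\mathfrak a=\mathfrak b_{V_0,U}$. In particular $\mathfrak a\in\mathfrak B_p$, which is precisely the hypothesis under which the reconstruction of Lemma~\ref{prop3} is valid.

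Next I would feed this $\mathfrak a$ into Lemma~\ref{prop3}. With $\mu:=\mu^{\mathfrak a}$ and $H:=H_\mu$ as in the theorem, Lemma~\ref{prop3} guarantees $H\in\mathfrak H_p$ and yields the pair $(\Theta(H),U_{\mathfrak a})$ whose operator $S_{\Theta(H),U_{\mathfrak a}}$ has spectral data $\mathfrak a$. Comparing with $\mathfrak a=\mathfrak b_{V_0,U}$ from the first step and invoking the injectivity of $(V,U)\mapsto\mathfrak b_{V,U}$ provided by Lemma~\ref{prop2}, I conclude $\Theta(H)=V_0$ and $U_{\mathfrak a}=U$. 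The second equality is already the assertion $U=U_{\mathfrak a}$ of the theorem, and the first identifies the reconstructed $V:=\Theta(H)$ with the explicit block-anti-diagonal matrix $V_0$.

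It then remains to extract $q$ from $V=V_0$ by reading off the off-diagonal blocks. Writing $V=(V_{ij})_{i,j=1}^2$, the equality $V=V_0$ says $V_{12}(x)=q(x)$ and $V_{21}(x)=q(-x)^*$ for a.e.\ $x\in(0,1)$; the first identity is formula (\ref{qFromQ}) on $(0,1)$, while replacing $x$ by $-x$ in the second and taking adjoints gives $q(x)=V_{21}(-x)^*$ for a.e.\ $x\in(-1,0)$, which is formula (\ref{qFromQ}) on $(-1,0)$. This completes the argument. The one place calling for care is the identification $\Theta(H)=V_0$, i.e.\ that the reconstructed $V$ genuinely has the anti-diagonal form encoded by the relation $(C_5)$; here it is automatic since $\mathfrak a\in\mathfrak A_p$, so the only real content is the correct application of the reduction lemma and the bijectivity statement, together with the elementary change of variable in the last step.
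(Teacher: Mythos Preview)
Your proof is correct and follows essentially the same route as the paper: reduce to the associated operator $S_{V_0,U}$ via Lemma~\ref{reduction}, apply the reconstruction Lemma~\ref{prop3} (together with the uniqueness from Lemma~\ref{prop2}) to identify $\Theta(H)=V_0$ and $U=U_{\mathfrak a}$, and then read off $q$ from the block structure of $V_0$, which is exactly the content of Remark~\ref{AssocRem} and formula~(\ref{qFromQ}). The only cosmetic difference is that the paper phrases the last step through Remark~\ref{AssocRem} and condition~$(C_5)$, whereas you extract the blocks directly from the explicit equality $V=V_0$; also note that Lemma~\ref{prop3} alone already yields $V_0=\Theta(H)$ and $U=U_{\mathfrak a}$, so your separate appeal to Lemma~\ref{prop2} is harmless but not strictly needed.
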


The procedure of finding the operator $T_{q,U}$ from its spectral data can be visualized by means of the following diagram:
$$
{\mathfrak A_p
\owns
\mathfrak a
\overset{(3.7)}{\underset{s_1}\longrightarrow}
U=U_{\mathfrak a};}
$$
$$
{\mathfrak A_p
\owns
\mathfrak a
\overset{(3.8)}{\underset{s_2}\longrightarrow}
\mu:=\mu^{\mathfrak a}
\overset{(3.9)}{\underset{s_3}\longrightarrow}
H:=H_\mu
\overset{(B.2)}{\underset{s_4}\longrightarrow}
V:=\Theta(H)
\overset{(3.10)}{\underset{s_5}\longrightarrow}
q.}
$$
Here, $s_j$ denotes the step number $j$. The steps $s_1$, $s_2$, $s_3$ and $s_5$ are trivial. The basic and non-trivial step is $s_4$ which requires solving the Krein equation (\ref{KreinEq}).

\begin{remark}\rm
By virtue of the condition $(C_5)$, the description of the class $\mathfrak A_p$ is not formulated in terms of eigenvalues and norming matrices directly. Unfortunately, this condition cannot be easily formulated even in terms of the accelerant $H:=H_\mu$. However, a certain complication as compared to the case of the separated boundary conditions is naturally expected. For instance, recall the classical results \cite{Marchenko} on the inverse problem of reconstructing Sturm--Liouville operators from two spectra: therein, a description of the two spectra in the case of the periodic/antiperiodic boundary conditions appears to be much more complicated than the one for the operators with separated ones.
\end{remark}

\begin{remark}\label{AlternateRem}\rm
We define the norming matrices of the operator $T_{q,U}$ using Definition~\ref{SDTqU} and the norming matrices of the operator $S_{V,U}$ using Definition~\ref{SDSQU}. However, one can also define the norming matrices of the operator $S_{V,U}$ similarly as in Definition~\ref{SDTqU}. Namely, let $\boldsymbol Y_V(\cdot,\zeta)\in W_2^1((0,1),\mathcal M_{4r})$ be a $4r\times4r$ matrix-valued solution of the Cauchy problem
\begin{equation}\label{DoubleCauchyPr}
\boldsymbol J\frac\d{\d x}\boldsymbol Y+\boldsymbol V\boldsymbol Y=\zeta\boldsymbol Y,\qquad
\boldsymbol Y(0,\zeta)=I_{4r}.
\end{equation}
For every $j\in\mathbb Z$, set
$$
\boldsymbol M_j:=\frac12 \int_0^1 \boldsymbol Y_V(s,\zeta_j)^*\boldsymbol Y_V(s,\zeta_j)\ \d s, \qquad \zeta_j:=\zeta_j(V,U).
$$
Observe that $\boldsymbol M_j={\boldsymbol M_j}^*>0$ and denote by $\boldsymbol P_j:\mathbb C^{4r}\to\mathbb C^{4r}$ the orthogonal projector onto $\pmb{\mathcal E}_j:=\ker[\boldsymbol A\boldsymbol Y_V(0,\zeta_j)+\boldsymbol B\boldsymbol Y_V(1,\zeta_j)]$, where
$$
\boldsymbol A:=\frac1{\sqrt2} \begin{pmatrix}0&0\\-I&I\end{pmatrix},\qquad
\boldsymbol B:=\frac1{\sqrt2} \begin{pmatrix}I&-U\\0&0\end{pmatrix},\qquad I:=I_{2r}.
$$
Next, define the positive self-adjoint operator $\boldsymbol B_j:\pmb{\mathcal E}_j\to\pmb{\mathcal E}_j$ via $\boldsymbol B_j:=(\boldsymbol P_j\boldsymbol M_j\boldsymbol P_j)\big|_{\pmb{\mathcal E}_j}$ and the operator $\boldsymbol D_j:\mathbb C^{4r}\to\mathbb C^{4r}$ by setting $\boldsymbol D_j:={\boldsymbol B_j}^{-1}\boldsymbol P_j$. As in Definition~\ref{SDTqU}, one may call $\boldsymbol D_j$ the norming matrix of the operator $S_{V,U}$. However, it turns out that $\boldsymbol D_j$ is of the same rank as the norming matrix $C_j$ from Definition~\ref{SDSQU} and, moreover, there are simple formulas relating $C_j$ and $\boldsymbol D_j$:
\begin{equation}\label{AlternateDef}
C_j=-\frac12 a\boldsymbol J\boldsymbol D_j\boldsymbol Ja^*,\qquad
\boldsymbol D_j=-2\,\boldsymbol Ja^* C_j a\boldsymbol J,
\end{equation}
where
\begin{equation}\label{aDef}
a:=\frac1{\sqrt2}\begin{pmatrix}I,&-I\end{pmatrix},\qquad I:=I_{2r}.
\end{equation}
Thus there is no essential difference between defining the norming matrices of $S_{V,U}$ as described in Definition~\ref{SDSQU} or as described in this remark. However, using Definition~\ref{SDSQU} is more convenient in this paper. The proof of formulas~(\ref{AlternateDef}) will follow from the proof of Lemma~\ref{reduction} below.
\end{remark}

\section{Proofs}\label{proofs}

In this section, we prove the main results of this paper.

\subsection{Proof of Lemma~\ref{Th0}}

Let $q\in\mathcal Q_p$, $p\in[1,\infty)$. We start by proving Lemma~\ref{Th0} providing a parametrization of all self-adjoint extensions of the minimal operator $T_q^0$ (see Sect.~\ref{setting}). Although the proof essentially uses the concept of boundary triplets, we omit the terminology and reduce it to straightforward manipulations:

\begin{proofTh0}
Let the operators
$$
E_j:W_2^1((-1,1),\mathbb C^{2r})\to \mathbb C^{2r},\qquad j=1,2
$$
act by the formulae
$$
E_1f:=(f_1(1),\,f_2(-1))^\top,\qquad E_2f:=(f_1(-1),\,f_2(1))^\top,
$$
where $f_1$ and $f_2$ are $\mathbb C^r$-valued functions composed of the first $r$ and the last $r$ components of $f$, respectively. Define the operator $E:W_2^1((-1,1),\mathbb C^{2r})\to\mathcal G:=\mathbb C^{2r}\times\mathbb C^{2r}$ by the formula
$$
E f:=(E_1f,\,E_2f)^\top.
$$
Then a direct verification shows that for every $f,h\in D(T_q)$,
\begin{equation}\label{aux1}
(T_qf|h)_{\mathcal H}-(f|T_qh)_{\mathcal H}=
-\i(E_1f|E_1h)_{\mathbb C^{2r}}+\i(E_2f|E_2h)_{\mathbb C^{2r}}=(\boldsymbol JE f|E h)_{\mathcal G},
\end{equation}
where $T_q$ is the maximal operator (see Sect.~\ref{setting}) and $\boldsymbol J:=-\i\,\mathrm{diag}(I_{2r},\,-I_{2r})$.

Now let $\mathcal T$ stand for the set of all linear operators $T:\mathcal H\to\mathcal H$ such that $T_q^0\subset T\subset T_q$ and denote by $\mathcal T_s$ the set of all \emph{self-adjoint} operators $T\in\mathcal T$. For every $T\in\mathcal T$, set
$$
F_T:=\{Ef\mid f\in D(T)\}.
$$
It is then easily seen from (\ref{aux1}) that every operator $T\in\mathcal T$ is related to its adjoint $T^*$ via
$$
F_{T^*}=(\boldsymbol JF_T)^\perp.
$$
Hence, the operator $T\in\mathcal T$ belongs to $\mathcal T_s$ if and only if
$$
\dim F_T=2r,\qquad \boldsymbol JF_T\perp F_T.
$$
It now follows from (\ref{aux1}) that for every $T\in\mathcal T_s$
and $f\in D(T)$, $\|E_1f\|=\|E_2f\|$, and thus we find that the
operator $T\in\mathcal T$ belongs to $\mathcal T_s$ if and only if
there exists a unitary matrix $U\in\mathcal U_{2r}$ such that
$$
D(T)=\{f\in D(T_q)\mid (E_1+UE_2)f=0\},
$$
i.e. $D(T)=\ker(E_1+UE_2)$.
Finally, to complete the proof it only remains to observe that for an arbitrary $f\in W_2^1((-1,1),\mathbb C^{2r})$ one has $f\in\ker(E_1+UE_2)$ if and only if
$$
A_Uf(-1)+B_Uf(1)=0,
$$
where $A_U$ and $B_U$ are given by formula (\ref{AUBU}).
\end{proofTh0}

\subsection{Basic properties of the operators $S_{V,U}$}

Before proving Lemma~\ref{reduction} allowing us to reduce the inverse problem for the operators $T_{q,U}$ to the one for the operators $S_{V,U}$, we need to list some properties of the latter.

We start by introducing the Weyl--Titchmarsh function of the operator $S_{V,U}$ (see \cite{ClarkGesz}). Let $V\in\mathfrak Q_p$. For an arbitrary $\zeta\in\mathbb C$, let $\varphi_V(\cdot,\zeta)$ and $\psi_V(\cdot,\zeta)$ be a $4r\times2r$ matrix-valued solutions of the Cauchy problems
\begin{equation}\label{phiCauchyProbl}
\boldsymbol J\frac{\d}{\d x}\varphi+\boldsymbol V\varphi=\zeta\varphi,\qquad \varphi(0,\zeta)=\boldsymbol J a^*,
\end{equation}
and
$$
\boldsymbol J\frac{\d}{\d x}\psi+\boldsymbol V\psi=\zeta\psi,\qquad \psi(0,\zeta)=a^*,
$$
respectively, where $\boldsymbol J$ and $\boldsymbol V$ are given by formula (\ref{theta1Q}) and $a$ is given by formula~(\ref{aDef}).
Set $\bigc_{V,U}(\zeta):=b_U\psi_V(1,\zeta)$ and $\bigs_{V,U}(\zeta):=b_U\varphi_V(1,\zeta)$, where
\begin{equation}\label{bU}
b_U:=\frac1{\sqrt2}\begin{pmatrix}U^{-1/2},&-U^{1/2}\end{pmatrix}
\end{equation}
and the square root of $U$ is taken so that if $\e^{2\i\gamma_k}$, $\gamma_k\in[0,\pi)$, are all distinct eigenvalues of $U$, then $\e^{\i\gamma_k}$ are all distinct eigenvalues of $U^{1/2}$. Then the function
\begin{equation}\label{WeylPrecise}
M_{V,U}(\zeta):=-\bigs_{V,U}(\zeta)^{-1}\bigc_{V,U}(\zeta)
\end{equation}
will be called the \emph{Weyl--Titchmarsh function} of the operator $S_{V,U}$.

The following proposition is proved in \cite{dirac2}:

\begin{proposition}\label{SCLemma}
For every $V\in\mathfrak Q_p$, there exists a unique function $K_V\in G_p^+(\mathcal M_{4r})$ (see Appendix~\ref{AppSpaces}) such that for all $\zeta\in\mathbb C$ and $x\in[0,1]$,
\begin{equation}\label{phiTrasfOp}
\varphi_V(x,\zeta)=\varphi_0(x,\zeta)+\int_0^x K_V(x,s)\varphi_0(s,\zeta)\ \d s,
\end{equation}
where
$
\varphi_0(x,\zeta)=\frac1{\sqrt{2}\i}
\begin{pmatrix}\e^{\i\zeta x}I\\\e^{-\i\zeta x}I\end{pmatrix},
$
$I:=I_{2r}$, is a solution of (\ref{phiCauchyProbl}) in the free case $\boldsymbol V=0$.
\end{proposition}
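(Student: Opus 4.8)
The plan is to read the integral representation (\ref{phiTrasfOp}) as the defining relation for a Volterra-type transformation operator $I+\mathcal K_V$ carrying the free solution $\varphi_0$ to $\varphi_V$, to determine its kernel $K_V$ from an equivalent \emph{Goursat} boundary-value problem on the triangle $\{0\le s\le x\le1\}$, and then to solve that problem by successive approximations, reading off existence, uniqueness and the membership $K_V\in G_p^+(\mathcal M_{4r})$ from the Volterra structure.

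First I would substitute the ansatz $\varphi_V(x,\zeta)=\varphi_0(x,\zeta)+\int_0^x K(x,s)\varphi_0(s,\zeta)\,\d s$ into the Cauchy problem (\ref{phiCauchyProbl}). Using that $\varphi_0$ solves the free equation, so that $\zeta\varphi_0(s,\zeta)=\boldsymbol J\,\partial_s\varphi_0(s,\zeta)$, I would integrate the term $\int_0^x K(x,s)\zeta\varphi_0(s,\zeta)\,\d s$ by parts and collect the genuine integral contribution and the boundary contributions at $s=x$ and $s=0$ separately. Writing $K=(K_{ij})_{i,j=1}^2$ in $2r\times2r$ blocks and noting $\boldsymbol J=\mathrm{diag}(-\i I,\i I)$ with $I:=I_{2r}$, the interior term is $\int_0^x N(x,s)\varphi_0(s,\zeta)\,\d s$ with $N:=\boldsymbol J K_x+K_s\boldsymbol J+\boldsymbol V K$ (subscripts denoting partial derivatives); since for fixed $x$ the exponentials $\e^{\pm\i\zeta s}$ are linearly independent in $\zeta$, the uniqueness theorem for the Fourier transform forces the interior equation
$$
\boldsymbol J K_x(x,s)+K_s(x,s)\,\boldsymbol J+\boldsymbol V(x)K(x,s)=0,\qquad 0<s<x<1 .
$$
Matching the boundary term at $s=x$ (whose coefficient multiplies $\varphi_0(x,\zeta)$, carrying $\e^{\pm\i\zeta x}$) yields the diagonal data
$$
K_{12}(x,x)=-\tfrac{\i}{2}V(x),\qquad K_{21}(x,x)=\tfrac{\i}{2}V(x)^* .
$$
Finally, since $\varphi_0(0,\zeta)=\boldsymbol J a^*$ is independent of $\zeta$, the initial condition in (\ref{phiCauchyProbl}) is met automatically by the ansatz, while the $\zeta$-independent boundary term produced at $s=0$ must vanish separately, giving $K_{11}(x,0)=K_{12}(x,0)$ and $K_{22}(x,0)=K_{21}(x,0)$.

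Next I would pass to characteristic coordinates adapted to $\boldsymbol J$: the diagonal blocks $K_{11},K_{22}$ propagate along $x-s=\mathrm{const}$ and the off-diagonal blocks $K_{12},K_{21}$ along $x+s=\mathrm{const}$. Integrating each block equation along its characteristic, imposing the diagonal data for $K_{12},K_{21}$ and the $s=0$ data for $K_{11},K_{22}$, converts the Goursat problem into an equivalent coupled system of Volterra integral equations of the schematic form $K=K^{(0)}+\mathcal V K$, where $K^{(0)}$ is the explicit term built from $V$ and $\mathcal V$ is an integral operator with kernel assembled from $\boldsymbol V$. I would then solve by the method of successive approximations: the triangular (Volterra) structure makes the iterated operators satisfy bounds in which $\mathcal V^n$ carries a factor $1/n!$, so the Neumann series $\sum_{n\ge0}\mathcal V^n K^{(0)}$ converges and defines $K_V$; uniqueness in $G_p^+(\mathcal M_{4r})$ is then immediate, since the homogeneous equation $K=\mathcal V K$ admits only $K=0$.

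The main obstacle is the regularity bookkeeping. For $p=1$ the potential $\boldsymbol V$ is merely summable, $K_V$ need not be continuous, and the estimates must be carried out in the norm of $G_p^+(\mathcal M_{4r})$ (see Appendix~\ref{AppSpaces}) rather than in the sup-norm, with the Volterra support $\{0\le s\le x\}$ preserved in the limit. I would handle this by first establishing the representation for smooth $V$, where the classical transformation-operator theory applies and $K_V$ is differentiable, deriving along the way the a priori bound $\|K_V\|_{G_p^+}\le C(\|V\|_{\mathfrak Q_p})$ and the Lipschitz estimate $\|K_{V_1}-K_{V_2}\|_{G_p^+}\le C\|V_1-V_2\|_{\mathfrak Q_p}$ on bounded sets; the general case $V\in\mathfrak Q_p$ then follows by density of smooth functions in $\mathfrak Q_p$ and completeness of $G_p^+(\mathcal M_{4r})$. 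Verifying that the successive approximations indeed converge in $G_p^+$, and that the limiting kernel retains the Volterra support, is the technical heart of the argument and precisely the point where the choice of the space $G_p^+(\mathcal M_{4r})$ does its work.
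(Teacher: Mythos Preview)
The paper does not prove this proposition at all: immediately before the statement it writes ``The following proposition is proved in \cite{dirac2}'' and gives no argument of its own. So there is no in-paper proof to compare against; what you have written is a self-contained proof of a result the paper imports.

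That said, your outline is the standard transformation-operator construction for Dirac-type systems and is correct in its architecture. The Goursat problem you derive is right: the interior PDE $\boldsymbol J K_x+K_s\boldsymbol J+\boldsymbol V K=0$, the diagonal data $K_{12}(x,x)=-\tfrac{\i}{2}V(x)$, $K_{21}(x,x)=\tfrac{\i}{2}V(x)^*$, and the conditions $K_{11}(x,0)=K_{12}(x,0)$, $K_{22}(x,0)=K_{21}(x,0)$ at $s=0$ all follow from your substitution computation, and your identification of the characteristics (diagonal blocks along $x-s=\mathrm{const}$, off-diagonal along $x+s=\mathrm{const}$) is correct. Converting to Volterra integral equations along the characteristics and solving by successive approximations, with the $G_p^+$ estimates replacing sup-norm bounds, is exactly the approach taken in \cite{MykDirac,dirac2}; your density argument for passing from smooth to $L_p$ potentials is also how those references handle the regularity issue. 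One minor point: once you have the Volterra integral system with $L_p$ coefficients, the successive approximations can be carried out directly in $G_p^+$ without first treating smooth $V$, so the density detour is not strictly necessary---but it does no harm.
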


For an arbitrary $\zeta\in\mathbb C$, we define the operator $\Phi_V(\zeta):\mathbb C^{2r}\to\mathbb H$ by setting
\begin{equation}\label{PhiOpDef}
[\Phi_V(\zeta)c](x):=\varphi_V(x,\zeta)c,\qquad x\in[0,1].
\end{equation}
It then follows from (\ref{phiTrasfOp}) that for every $\zeta\in\mathbb C$,
\begin{equation}\label{PhiTransfOp}
\Phi_V(\zeta)=(\mathscr I+\mathscr K_V)\Phi_0(\zeta),
\end{equation}
where $\mathscr K_V:\mathbb H\to\mathbb H$ is the integral operator with kernel $K_V$ and $\mathscr I$ is the identity operator in $\mathbb H$. Note that for every $V\in\mathfrak Q_p$, $\mathscr K_V$ is a Volterra operator so that $\mathscr I+\mathscr K_V$ is invertible.
Furthermore, it follows that for every $V\in\mathfrak Q_p$ and $\zeta\in\mathbb C$,
\begin{equation}\label{kerPhi}
\ker\Phi_V(\zeta)=\{0\},\qquad \Ran\Phi_V^*(\zeta)=\mathbb C^{2r}.
\end{equation}

\noindent
Now we are ready to state the basic properties of the operators $S_{V,U}$:

\begin{proposition}\label{DirPropTh}
For every $V\in\mathfrak Q_p$ and $U\in\mathcal U_{2r}$,
\begin{itemize}
\item[$(i)$]the operator $S_{V,U}$ is self-adjoint;
\item[$(ii)$]the spectrum $\sigma(S_{V,U})$ of the operator $S_{V,U}$ consists of countably many isolated real eigenvalues of finite multiplicity; moreover,
    $$
    \sigma(S_{V,U})=\{\zeta\in\mathbb C\mid \ker \bigs_{V,U}(\zeta)\neq\{0\}\};
    $$
\item[$(iii)$]for every $j\in\mathbb Z$, let $\mathscr P_j:\mathbb H\to\mathbb H$ be the orthogonal projector onto $\ker(S_{V,U}-\zeta_j\mathscr I)$, $\zeta_j:=\zeta_j(V,U)$ be eigenvalues of the operator $S_{V,U}$ and $C_j:=C_j(V,U)$ be the corresponding norming matrices; then for every $j\in\mathbb Z$ one has $C_j\ge0$ and
\begin{equation}\label{ProjForm}
\mathscr P_j=\Phi_V(\zeta_j)C_j\Phi_V^*(\zeta_j).
\end{equation}
\end{itemize}
\end{proposition}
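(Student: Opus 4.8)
The plan is to handle the three assertions in turn, in each case using the transformation-operator representation (\ref{PhiTransfOp}) and the Weyl--Titchmarsh function (\ref{WeylPrecise}) to reduce the analytic content to finite-dimensional linear algebra on the boundary values. For Part~$(i)$ I would first record the Lagrange identity for $\mathfrakS_V$. Since $\boldsymbol V=\boldsymbol V^*$ and $\boldsymbol J^*=-\boldsymbol J$, integration by parts gives, for $f,g\in D(\mathfrakS_V)$,
$$(\mathfrakS_V f|g)_{\mathbb H}-(f|\mathfrakS_V g)_{\mathbb H}=\left[\,g(x)^*\boldsymbol J f(x)\,\right]_0^1=g(1)^*\boldsymbol J f(1)-g(0)^*\boldsymbol J f(0).$$
Substituting the boundary conditions $f_1(0)=f_2(0)$, $f_1(1)=Uf_2(1)$ (and the same for $g$) makes both endpoint terms vanish, the term at $x=1$ by $U^*U=I$; hence $S_{V,U}$ is symmetric. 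To pass from symmetry to self-adjointness I would rewrite the boundary conditions as $\boldsymbol A f(0)+\boldsymbol B f(1)=0$ with $\boldsymbol A,\boldsymbol B$ as in Remark~\ref{AlternateRem} and verify the two algebraic conditions $\rank(\boldsymbol A\ \ \boldsymbol B)=4r$ and $\boldsymbol A\boldsymbol J\boldsymbol A^*=\boldsymbol B\boldsymbol J\boldsymbol B^*$ (a short computation shows both sides equal $0$). The self-adjointness criterion for such first-order systems, proved exactly as the parametrization preceding Lemma~\ref{Th0} but now for $\mathfrakS_V$ on $(0,1)$, then yields self-adjointness.

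For Part~$(ii)$ the key observation is that the columns of $\varphi_V(\cdot,\zeta)$ already encode the left boundary condition. Indeed $\varphi_V(0,\zeta)=\boldsymbol J a^*=\tfrac{-\i}{\sqrt2}(I,\,I)^\top$ has equal upper and lower $2r\times2r$ blocks, so every solution $\varphi_V(\cdot,\zeta)c$ satisfies $f_1(0)=f_2(0)$; a dimension count (the left condition cuts the $4r$-dimensional solution space down to $2r$ dimensions, and $\varphi_V(0,\zeta)$ has full column rank by (\ref{kerPhi})) shows these are \emph{all} such solutions. Imposing the right condition on $\varphi_V(\cdot,\zeta)c$ becomes $(\varphi_1-U\varphi_2)c=0$, where $\varphi_V(1,\zeta)=(\varphi_1,\varphi_2)^\top$; since $\bigs_{V,U}(\zeta)=b_U\varphi_V(1,\zeta)=\tfrac1{\sqrt2}U^{-1/2}(\varphi_1-U\varphi_2)$ with $U^{-1/2}$ invertible, this is equivalent to $\bigs_{V,U}(\zeta)c=0$. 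Hence $\zeta\in\sigma(S_{V,U})$ iff $\ker\bigs_{V,U}(\zeta)\neq\{0\}$, and the eigenspace is $\Phi_V(\zeta)\ker\bigs_{V,U}(\zeta)$. Reality follows from Part~$(i)$, while discreteness and finiteness of multiplicities follow because $S_{V,U}$ has compact (Hilbert--Schmidt) resolvent, the system being regular on a finite interval; alternatively one notes that $\det\bigs_{V,U}$ is entire and, by the free-case value $\det\bigs_{0,U}(\zeta)=\prod_k\sin^{m_k}(\zeta-\gamma_k)$ together with Proposition~\ref{SCLemma}, not identically zero, so its zeros are isolated.

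For Part~$(iii)$ I would build the resolvent as an integral operator whose Green function is assembled from the left-regular solution $\varphi_V(\cdot,\zeta)$ and the Weyl solution $\chi_V(\cdot,\zeta):=\psi_V(\cdot,\zeta)+\varphi_V(\cdot,\zeta)M_{V,U}(\zeta)$; the latter satisfies the right boundary condition because $b_U\chi_V(1,\zeta)=\bigc_{V,U}(\zeta)+\bigs_{V,U}(\zeta)M_{V,U}(\zeta)=0$ by (\ref{WeylPrecise}). The only $\zeta$-singularities of this Green function sit in $M_{V,U}$, so at an eigenvalue $\zeta_j$ the residue of the kernel is proportional to $\varphi_V(x,\zeta_j)\,C_j\,\varphi_V(t,\zeta_j)^*$ with $C_j=-\res_{\zeta=\zeta_j}M_{V,U}$. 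Since for the self-adjoint $S_{V,U}$ the Riesz projection satisfies $\mathscr P_j=-\res_{\zeta=\zeta_j}(S_{V,U}-\zeta\mathscr I)^{-1}$, reading off this residue and fixing the normalization constant through the $\boldsymbol J$-Wronskian of $\varphi_V$ and $\chi_V$ produces exactly (\ref{ProjForm}), $\mathscr P_j=\Phi_V(\zeta_j)C_j\Phi_V^*(\zeta_j)$, with $\Phi_V$ as in (\ref{PhiOpDef}). Finally $C_j\ge0$ is immediate from the matrix Herglotz property of $M_{V,U}$: the residue of a Nevanlinna matrix function at a real pole is negative semidefinite, so $C_j=-\res_{\zeta=\zeta_j}M_{V,U}\ge0$.

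I expect Parts~$(i)$ and~$(ii)$ to be essentially routine once the boundary data are organized as above. The technical heart is Part~$(iii)$: pinning down the exact normalization constant in the Green function, so that the residue reproduces $C_j$ rather than a scalar multiple of it, and confirming that each $\zeta_j$ is a \emph{simple} pole of the resolvent. Both points rest on the $\boldsymbol J$-Wronskian identity for $\varphi_V,\chi_V$ and on the Herglotz structure of $M_{V,U}$, and both are the computations carried out for the separated-boundary case in \cite{dirac1,dirac2}, which I would adapt verbatim here.
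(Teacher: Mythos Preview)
Your proposal is correct and matches the paper's approach: the paper does not give an independent argument but simply states that the proof repeats that of Theorem~2.1 in \cite{dirac1}, and you have outlined precisely that argument---the Lagrange identity for self-adjointness, the identification of eigenfunctions with $\varphi_V(\cdot,\zeta)\ker\bigs_{V,U}(\zeta)$, and the Green-function/residue computation for $\mathscr P_j$---concluding yourself that the technical core is to be adapted verbatim from \cite{dirac1,dirac2}. There is nothing to add.
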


\noindent
The proof of Proposition~\ref{DirPropTh} repeats the proof of Theorem~2.1 in \cite{dirac1}.

Since $S_{V,U}$ is a self-adjoint operator with discrete spectrum, it also follows that
\begin{equation}\label{ResIdent}
\sum_{j=-\infty}^\infty \mathscr P_j=\mathscr I;
\end{equation}
by virtue of the relations (\ref{ProjForm}) and (\ref{ResIdent}), the operators $\Phi_V(\zeta)$ will play an important role in this investigation.

\subsection{Proof of Lemma~\ref{reduction}}

Now we are ready to prove Lemma~\ref{reduction} allowing us to reduce the inverse spectral problem for the operators $T_{q,U}$ to the one for the operators $S_{V,U}$:

\begin{proofReduction}
Let $q\in\mathcal Q_p$, $U\in\mathcal U_{2r}$ and $\mathfrak a_{q,U}=((\lambda_j,A_j))_{j\in\mathbb Z}$ be the spectral data of the operator $T_{q,U}$. Consider the unitary transformation $\mathcal V:\mathcal H\to\mathbb H$ given by the formula
\begin{equation}\label{mathcalU}
(\mathcal Vy)(x)=\begin{pmatrix}y_1(x),&y_2(-x),&y_1(-x),&y_2(x)\end{pmatrix}^\top,\qquad x\in(0,1),
\end{equation}
where $y_1$ and $y_2$ are $\mathbb C^r$-valued functions composed of the first $r$ and the last $r$ components of $y$, respectively. Then a direct verification shows that
$$
T_{q,U}=\mathcal V^{-1}S_{V,U}\mathcal V,
$$
where the potential $V$ is given by formula (\ref{Q}). In particular, it then follows that the spectra of the operators $T_{q,U}$ and $S_{V,U}$ coincide and for every $j\in\mathbb Z$,
\begin{equation}\label{LambdaZeta}
\lambda_j=\zeta_j(V,U).
\end{equation}
Thus it only remains to prove that for every $j\in\mathbb Z$,
\begin{equation}\label{Aalpha}
A_j=C_j(V,U).
\end{equation}

For an arbitrary $\lambda\in\mathbb C$, define the operator $\Psi_q(\lambda):\mathbb C^{2r}\to\mathcal H$ by the formula
$$
[\Psi_q(\lambda)c](x):=\frac1{\sqrt2\i}Y_q(x,\lambda)c,\qquad x\in[-1,1],
$$
where $Y_q$ is a solution of the Cauchy problem (\ref{MainCauchyProbl}). For every $j\in\mathbb Z$, let $\mathcal P_j:\mathcal H\to\mathcal H$ be the orthogonal projector onto the eigenspace $\ker(T_{q,U}-\lambda_j\mathcal I)$, where $\mathcal I$ is the identity operator in $\mathcal H$. Then (\ref{Aalpha}) will be proved if we show that
\begin{equation}\label{PjForm}
\mathcal P_j=\Psi_q(\lambda_j)A_j\Psi_q^*(\lambda_j).
\end{equation}
Indeed, observe that for every $j\in\mathbb Z$,
\begin{equation}\label{PjRel}
\mathcal P_j=\mathcal V^{-1}\mathscr P_j\mathcal V,
\end{equation}
where $\mathscr P_j:\mathbb H\to\mathbb H$ is the orthogonal projector onto $\ker(S_{V,U}-\lambda_j\mathscr I)$ and $\mathcal V$ is the unitary transformation (\ref{mathcalU}). Furthermore, a direct verification shows that for every $\lambda\in\mathbb C$,
\begin{equation}\label{PhiPsiRel}
\Phi_V(\lambda)=\mathcal V\Psi_q(\lambda).
\end{equation}
We then obtain from (\ref{PjForm})--(\ref{PhiPsiRel}), (\ref{LambdaZeta}) and (\ref{ProjForm}) that for every $j\in\mathbb Z$,
$$
\Phi_V(\lambda_j)(A_j-C_j(V,U))\Phi_V^*(\lambda_j)=0.
$$
Since for every $\lambda\in\mathbb C$, $\ker\Phi_V(\lambda)=\{0\}$ and $\Ran\Phi_V^*(\lambda)=\mathbb C^{2r}$, this proves (\ref{Aalpha}).

Thus it only remains to prove (\ref{PjForm}).
For this purpose, note that for every $j\in\mathbb Z$, the operator $\tilde{\mathcal P}_j:=\Psi_q(\lambda_j)A_j\Psi_q^*(\lambda_j)$ is self-adjoint and
$$
\Ran\tilde{\mathcal P}_j=\Psi_q(\lambda_j)\mathcal E_j=\ker(T_{q,U}-\lambda_j\mathcal I),
$$
where $\mathcal E_j:=\ker [A_U Y_q(-1,\lambda_j)+B_U Y_q(1,\lambda_j)]$. Therefore, in order to prove that $\tilde{\mathcal P}_j=\mathcal P_j$ it suffices to prove that ${\tilde{\mathcal P}_j}^2=\tilde{\mathcal P}_j$. To this end, recall Remark~\ref{NormMatrRem} and verify that
$$
A_j\Psi_q^*(\lambda_j)\Psi_q(\lambda_j)A_j=
A_j\left\{\frac12\int_{-1}^1 Y_q(s,\lambda_j)^*Y_q(s,\lambda_j)\ \d s\right\}A_j=A_j M_j A_j
=A_j.
$$
Therefore, ${\tilde{\mathcal P}_j}^2=\tilde{\mathcal P}_j$ follows and the proof is complete.
\end{proofReduction}

The following important corollary now follows from Lemma~\ref{reduction}:

\begin{corollary}
Every sequence $\mathfrak a$ from the class $\mathfrak A_p$ belongs to the class $\mathfrak B_p$ and is the spectral data of the operator $S_{V,U}$ with the potential $V$ of the form (\ref{Q}).
\end{corollary}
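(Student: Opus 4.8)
The plan is to read the statement off directly from Lemma~\ref{reduction}, whose proof we have just completed; the entire content of the corollary is already contained there, and what remains is only the bookkeeping of function classes and the matching of the two notions of spectral data. First I would fix an arbitrary sequence $\mathfrak a\in\mathfrak A_p$ and unwind the definition (\ref{Tsd}): by construction of $\mathfrak A_p$ there exist $q\in\mathcal Q_p$ and $U\in\mathcal U_{2r}$ such that $\mathfrak a=\mathfrak a_{q,U}$. To this $q$ I would associate the potential $V$ defined by formula (\ref{Q}).

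Next I would check that this $V$ is an admissible potential for the auxiliary problem, i.e. that $V\in\mathfrak Q_p$. This is the only routine step. The two off-diagonal blocks of $V$ on $(0,1)$ are the maps $x\mapsto q(x)$ and $x\mapsto q(-x)^*$; since $q\in L_p((-1,1),\mathcal M_r)$, a change of variables shows that both of these belong to $L_p((0,1),\mathcal M_r)$, whence $V\in L_p((0,1),\mathcal M_{2r})=\mathfrak Q_p$. Consequently the pair $(V,U)$ is admissible in the definition of $\mathfrak B_p$, so that $\mathfrak b_{V,U}$ is indeed a member of $\mathfrak B_p$.

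Finally I would invoke Lemma~\ref{reduction}, which asserts that with $V$ given by (\ref{Q}) the operator $T_{q,U}$ is unitarily equivalent to $S_{V,U}$ and that their spectral data coincide; concretely, the identities (\ref{LambdaZeta}) and (\ref{Aalpha}) established in that proof give $\lambda_j=\zeta_j(V,U)$ and $A_j=C_j(V,U)$ for every $j\in\mathbb Z$, and therefore $\mathfrak a_{q,U}=\mathfrak b_{V,U}$. Chaining these equalities yields $\mathfrak a=\mathfrak a_{q,U}=\mathfrak b_{V,U}\in\mathfrak B_p$, with $\mathfrak a$ being the spectral data of $S_{V,U}$ for a potential $V$ of the form (\ref{Q}), exactly as claimed. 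I do not expect any genuine obstacle: the substantive work lies entirely in Lemma~\ref{reduction}, and the corollary is a direct consequence obtained by matching definitions, the $L_p$-membership of $V$ being the only item needing verification.
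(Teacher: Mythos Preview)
Your proposal is correct and follows exactly the route the paper intends: the corollary is stated in the paper as an immediate consequence of Lemma~\ref{reduction} without a separate proof, and your argument simply unpacks the definitions of $\mathfrak A_p$ and $\mathfrak B_p$, verifies $V\in\mathfrak Q_p$, and invokes the equality of spectral data established in (\ref{LambdaZeta}) and (\ref{Aalpha}). There is nothing to add.
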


\begin{remark}\rm
The proof of formulas~(\ref{AlternateDef}) providing a relations between differently defined norming matrices of the operator $S_{V,U}$ (see Remark~\ref{AlternateRem}) also follows from the proof of Lemma~\ref{reduction}. Namely, let $\boldsymbol Y_V(\cdot,\zeta)$ be a $4r\times4r$ matrix-valued solution of the Cauchy problem~(\ref{DoubleCauchyPr}) and $\pmb\Psi_V(\zeta):\mathbb C^{4r}\to\mathbb H$, where $\zeta\in\mathbb C$, be an operator defined by the formula
$$
[\pmb\Psi_V(\zeta)c](x):=\frac1{\sqrt2\i}\boldsymbol Y_V(x,\zeta)c,\qquad x\in[0,1].
$$
As in the proof of Lemma~\ref{reduction}, it can be shown that $\mathscr P_j=\pmb\Psi_V(\zeta_j)\boldsymbol D_j\pmb\Psi_V^*(\zeta_j)$, where $\mathscr P_j$ is the eigenprojector of the operator $S_{V,U}$ and $\boldsymbol D_j$ is as in Remark~\ref{AlternateRem}. Next, since $\varphi_V(x,\zeta)=\boldsymbol Y_V(x,\zeta)\boldsymbol Ja^*$ (see~(\ref{phiCauchyProbl})), it follows that $\Phi_V(\zeta)=\sqrt2\i\pmb\Psi_V(\zeta)\boldsymbol Ja^*$ (see~(\ref{PhiOpDef})); taking into account also (\ref{ProjForm}) we obtain that
$$
\pmb\Psi_V(\zeta_j)\left(
\boldsymbol D_j+2\boldsymbol Ja^* C_j a\boldsymbol J
\right)\pmb\Psi_V^*(\zeta_j)=0,
$$
where $C_j:=C_j(V,U)$ is as in Definition~\ref{SDSQU}. Since for an arbitrary $\zeta\in\mathbb C$ one has $\ker\pmb\Psi_V(\zeta)=\{0\}$ and $\Ran\pmb\Psi_V^*(\zeta)=\mathbb C^{4r}$, this proves the second relation in (\ref{AlternateDef}). The first one follows since $\boldsymbol J^2=-I_{4r}$ and $aa^*=I_{2r}$.
\end{remark}

\subsection{Proof of Lemmas~\ref{prop1}--\ref{prop3}}

We now proceed to solve the inverse spectral problem for the operators $S_{V,U}$.
The proof of Lemmas~\ref{prop1}--\ref{prop3} is based on the connection between the operators $S_{V,U}$ and $S_{V,I}$, where $I:=I_{2r}$ is the identity matrix. We then use the results of \cite{dirac2} where the direct and inverse spectral problems for the operators $S_{V,I}$ were solved.

Let $V\in\mathfrak Q_p$ and $U\in\mathcal U_{2r}$. We start from the following observation:

\begin{lemma}\label{AccLemma}
Let $\mathfrak a:=((\lambda_j,A_j))_{j\in\mathbb Z}$ be the spectral data of the operator $S_{V,U}$ and $\mu:=\mu^{\mathfrak a}$ be its spectral measure. Then for every $f\in\mathcal S^{2r}$ such that $\supp f\subset(-1,1)$,
$$
(H_\mu,f)=(H_\nu,f),
$$
where $\nu$ is the spectral measure of the operator $S_{V,I}$.
\end{lemma}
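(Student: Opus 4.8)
The plan is to prove the sharper, boundary-condition-free statement that for each fixed $V\in\mathfrak Q_p$ the restriction of $\widehat{\mu_{V,U}}-\widehat{\mu_{0,U}}$ to $(-1,1)$ is independent of $U\in\mathcal U_{2r}$. Granting this, the lemma is immediate: the measures $\mu=\mu_{V,U}$ and $\nu=\mu_{V,I}$ have free counterparts $\mu_{0,U}$ and $\mu_{0,I}$ (here $U=U_{\mathfrak a}$ is forced by the asymptotics of the spectral data of $S_{V,U}$), so that $H_\mu=(\widehat{\mu_{V,U}}-\widehat{\mu_{0,U}})|_{(-1,1)}$ and $H_\nu=(\widehat{\mu_{V,I}}-\widehat{\mu_{0,I}})|_{(-1,1)}$ necessarily coincide.

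First I would feed the eigenprojector formula (\ref{ProjForm}) into the resolution of the identity (\ref{ResIdent}) and use the transformation-operator identity (\ref{PhiTransfOp}), namely $\Phi_V(\zeta)=(\mathscr I+\mathscr K_V)\Phi_0(\zeta)$, together with $\Phi_V^*(\zeta)=\Phi_0^*(\zeta)(\mathscr I+\mathscr K_V)^*$. Factoring $\mathscr I+\mathscr K_V$ out on the left and $(\mathscr I+\mathscr K_V)^*$ on the right turns $\sum_j\mathscr P_j=\mathscr I$ into
$$
\mathscr G_V:=\sum_{j\in\mathbb Z}\Phi_0(\zeta_j)\,C_j\,\Phi_0^*(\zeta_j)=(\mathscr I+\mathscr K_V)^{-1}\big[(\mathscr I+\mathscr K_V)^*\big]^{-1},
$$
where $\zeta_j:=\zeta_j(V,U)$ and $C_j:=C_j(V,U)$. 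The decisive point is that the right-hand side involves only the Krein kernel $K_V$ of Proposition~\ref{SCLemma}, hence depends on $V$ alone; thus $\mathscr G_V$ is independent of $U$, although the eigenvalues $\zeta_j$ and the norming matrices $C_j$ separately are not. Taking $V=0$, where $\mathscr K_0=0$, gives $\mathscr G_0=\mathscr I$, so $\mathscr G_V-\mathscr I$ is a $U$-independent integral operator.

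Next I would identify the kernel of $\mathscr G_V-\mathscr I$. A direct computation with $\varphi_0(x,\zeta)=\tfrac1{\sqrt2\i}(\e^{\i\zeta x}I,\e^{-\i\zeta x}I)^\top$ gives, for real $\zeta$ and $C\in\mathcal M_{2r}$,
$$
\varphi_0(x,\zeta)\,C\,\varphi_0(s,\zeta)^*=\frac12\begin{pmatrix}\e^{\i\zeta(x-s)}C & \e^{\i\zeta(x+s)}C\\\e^{-\i\zeta(x+s)}C & \e^{-\i\zeta(x-s)}C\end{pmatrix}.
$$
Summing over the spectrum and subtracting the free sum, each block of the kernel of $\mathscr G_V-\mathscr I$ becomes a value of $\widehat{\mu_{V,U}}-\widehat{\mu_{0,U}}$ at one of the arguments $\pm\tfrac{x-s}2,\pm\tfrac{x+s}2$; for $x,s\in(0,1)$ these all lie in $(-1,1)$, and the subtraction of the free transform $\widehat{\mu_{0,U}}=I_{2r}\delta_0$ exactly cancels the diagonal $\delta$-singularity, leaving the $L_p$-function $H_\mu$. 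Thus the kernel of $\mathscr G_V-\mathscr I$ is a fixed ``folding'' of $H_\mu$: the off-diagonal blocks reproduce $H_\mu$ on $(-1,0)\cup(0,1)$ through $x+s=2t$, and the diagonal blocks reproduce it on $(-\tfrac12,\tfrac12)$ through $x-s=2t$. This correspondence $H_\mu\mapsto\text{kernel}$ is a linear bijection, so the $U$-independence of $\mathscr G_V-\mathscr I$ forces $H_\mu$ to be independent of $U$; in particular $H_\mu=H_\nu$ on $(-1,1)$, which is the claimed pairing for every $f\in\mathcal S^{2r}$ with $\supp f\subset(-1,1)$.

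The step I expect to be the main obstacle is the rigorous passage from the formal exponential series $\sum_jC_j\e^{2\i\zeta_j t}$ to an honest $L_p$-kernel: the transforms $\widehat{\mu_{V,U}}$ are genuine distributions, and only their difference with the free transform is regular. This is exactly why the free measure must be subtracted \emph{before} restricting to $(-1,1)$, and it is here that one must invoke the $G_p^+$-regularity of $K_V$ from Proposition~\ref{SCLemma} to match the regularity of the kernel of $\mathscr G_V-\mathscr I=(\mathscr I+\mathscr K_V)^{-1}\big[(\mathscr I+\mathscr K_V)^*\big]^{-1}-\mathscr I$. Once the kernel identity is established as an $L_p$-identity --- most cleanly by testing both operators against vectors $g,h\in\mathbb H$ and matching the block structure to the pairing $(\mu-\mu_0,\widehat f)$ with $f$ assembled from $g$ and $h$ --- the $U$-independence, and with it the lemma, follows at once.
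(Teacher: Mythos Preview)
Your proposal is correct and follows essentially the same route as the paper: both feed (\ref{ProjForm}) and (\ref{ResIdent}) through the transformation-operator identity (\ref{PhiTransfOp}) to obtain the $U$-independent equation $\sum_j\Phi_0(\lambda_j)A_j\Phi_0^*(\lambda_j)=(\mathscr I+\mathscr K_V)^{-1}(\mathscr I+\mathscr K_V^*)^{-1}$, and both then identify the kernel of this operator minus the identity with the ``folded'' accelerant $F_{H}$. The only cosmetic difference is that the paper quotes from \cite{dirac2} the identity $(\mathscr I+\mathscr K_V)^{-1}(\mathscr I+\mathscr K_V^*)^{-1}=\mathscr I+\mathscr F_{H_\nu}$ and then passes through the unitary $\mathcal W$ to a convolution operator before testing against $f$, whereas you compute the block kernel of $\Phi_0(\zeta)C\Phi_0^*(\zeta)$ directly and invoke injectivity of $H\mapsto F_H$; your claim that $\widehat{\mu_{0,U}}$ restricts to $I_{2r}\delta_0$ on $(-1,1)$ is correct (by Poisson summation and $\sum_kP_k^0=I_{2r}$), and the distributional passage you flag as the main obstacle is exactly what the paper handles by working at the level of bounded operators in $\mathbb H$ converging in the strong topology.
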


\begin{proof}
For every $j\in\mathbb Z$, let $\mathscr P_j:\mathbb H\to\mathbb H$ be the orthogonal projector onto the eigenspace $\ker(S_{V,U}-\lambda_j\mathscr I)$, where $\mathscr I$ is the identity operator in $\mathbb H$. We then find from (\ref{ProjForm}) and (\ref{ResIdent}) that
$$
\sum_{j=-\infty}^\infty \Phi_V(\lambda_j)A_j\Phi_V^*(\lambda_j)=\mathscr I,
$$
where the series on the left hand side converges in the strong operator topology.
Recalling also (\ref{PhiTransfOp}), we observe that
\begin{equation}\label{C4AuxEq1}
\sum_{j=-\infty}^\infty \Phi_0(\lambda_j)A_j\Phi_0^*(\lambda_j)=
(\mathscr I+\mathscr K_V)^{-1}(\mathscr I+{\mathscr K_V}^*)^{-1},
\end{equation}
where $\mathscr K_V:\mathbb H\to\mathbb H$ is the integral operator with kernel $K_V$ from Proposition~\ref{SCLemma}. Note that the right hand side of (\ref{C4AuxEq1}) depends \emph{only} on the potential $V$ of the operator $S_{V,U}$, while the left hand side of (\ref{C4AuxEq1}) depends \emph{only} on the spectral data.

Now let $\nu$ be the spectral measure of the operator $S_{V,I}$ and $H:=H_\nu$. It then follows from the results of \cite{dirac2} that $H\in L_p((-1,1),\mathcal M_{2r})$ and
\begin{equation}\label{C4AuxEq2}
(\mathscr I+\mathscr K_V)^{-1}(\mathscr I+{\mathscr K_V}^*)^{-1}=\mathscr I+\mathscr F_H,
\end{equation}
where $\mathscr F_H:\mathbb H\to\mathbb H$ is the integral operator with kernel
\begin{equation}\nonumber
F_H(x,t):=\frac{1}{2}\begin{pmatrix}H\left(\frac{x-t}{2}\right)&
H\left(\frac{x+t}{2}\right)\\
H\left(-\frac{x+t}{2}\right)&
H\left(-\frac{x-t}{2}\right)\end{pmatrix},
\qquad 0\le x,t\le1.
\end{equation}
Therefore, we find from (\ref{C4AuxEq1}) and (\ref{C4AuxEq2}) that
\begin{equation}\label{C4AuxEq3}
\sum_{j=-\infty}^\infty \Phi_0(\lambda_j)A_j\Phi_0^*(\lambda_j)=\mathscr I+\mathscr F_H,\qquad H:=H_\nu.
\end{equation}
The lemma will follow directly from this relation.

Indeed, set $\tilde{\mathcal H}:=L_2((0,1),\mathbb C^{2r})$ and consider the unitary transformation $\mathcal W:\tilde{\mathcal H}\to\mathbb H$ acting by the formula
$$
(\mathcal Wg)(x):=\frac1{\sqrt2}\begin{pmatrix}g\left(\frac{1+x}2\right),&
g\left(\frac{1-x}2\right)\end{pmatrix}^\top,\qquad g\in\tilde{\mathcal H}.
$$
Then a direct verification shows that $\mathscr F_H=\mathcal W\mathscr H\mathcal W^{-1}$, where $\mathscr H:\tilde{\mathcal H}\to\tilde{\mathcal H}$ is the integral operator given by the formula
$$
(\mathscr H g)(x)=\int_0^1 H(x-s)g(s)\ \d s.
$$
Furthermore, it also follows that $\Phi_0(\lambda)=\mathcal W\Upsilon_0(\lambda)$, where for an arbitrary $\lambda\in\mathbb C$, the operator $\Upsilon_0(\lambda):\mathbb C^{2r}\to\tilde{\mathcal H}$ acts by the formula
$$
[\Upsilon_0(\lambda)c](x):=\e^{2\i\lambda x}c.
$$
Therefore, (\ref{C4AuxEq3}) is reduced to the equality
\begin{equation}\label{C4AuxEq4}
\sum_{j=-\infty}^\infty \Upsilon_0(\lambda_j)A_j\Upsilon_0^*(\lambda_j)=\widetilde{\mathcal I}+\mathscr H,
\end{equation}
where $\widetilde{\mathcal I}$ is the identity operator in $\tilde{\mathcal H}$. In particular, in the free case $V=0$ (\ref{C4AuxEq4}) reads
\begin{equation}\label{C4AuxEq4a}
\sum_{n=-\infty}^\infty \Upsilon_0(\lambda_n^0)A_n^0\Upsilon_0^*(\lambda_n^0)=\widetilde{\mathcal I},
\end{equation}
where $\lambda_n^0$ and $A_n^0$, $n\in\mathbb Z$, are eigenvalues and norming matrices of the free operator $S_{0,U}$, respectively.
Since for an arbitrary $f\in\mathcal S^{2r}$ such that $\supp f\subset(-1,1)$ one has $(H,f)=\int_{-1}^1 H(s)f(s)\ \d s$,
$$
(\widehat\mu,f)=\sum_{j=-\infty}^\infty\
\int_{-1}^1 \e^{2\i\lambda_js}A_j\ f(s)\ \d s,
\qquad
(\widehat{\mu_0},f)=\sum_{n=-\infty}^\infty\
\int_{-1}^1 \e^{2\i\lambda_n^0s}A_n^0\ f(s)\ \d s,
$$
substituting (\ref{C4AuxEq4a}) into (\ref{C4AuxEq4}) and using the formulas for $\Upsilon_0(\lambda)$ and $\mathscr H$ one can easily find that
$$
(H_\mu,f):=(\widehat\mu-\widehat{\mu_0},f)=(H,f),
$$
as desired.
\end{proof}

We now use the results of \cite{dirac2} to obtain the following corollary:

\begin{corollary}\label{AccCor}
For an arbitrary $V\in\mathfrak Q_p$ and $U\in\mathcal U_{2r}$, the spectral data of the operator $S_{V,U}$ satisfy the conditions $(C_3)$ and $(C_4)$ from Lemma~\ref{prop1}.
\end{corollary}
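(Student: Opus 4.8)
The plan is to treat the two conditions separately, exploiting that by Lemma~\ref{AccLemma} the accelerant does not depend on $U$, while the completeness required in $(C_3)$ is a reformulation of the resolution of the identity for the self-adjoint operator $S_{V,U}$. Throughout, let $\mathfrak a:=((\lambda_j,A_j))_{j\in\mathbb Z}$ denote the spectral data of $S_{V,U}$, so that $\lambda_j=\zeta_j(V,U)$ and $A_j=C_j(V,U)$, and put $\mu:=\mu^{\mathfrak a}$. Condition $(C_4)$ is then almost immediate: by Lemma~\ref{AccLemma} one has $(H_\mu,f)=(H_\nu,f)$ for every $f\in\mathcal S^{2r}$ with $\supp f\subset(-1,1)$, where $\nu$ is the spectral measure of $S_{V,I}$; that is, $H_\mu$ and $H_\nu$ coincide as distributions on $(-1,1)$. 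But in the proof of Lemma~\ref{AccLemma} we already recorded, citing \cite{dirac2}, that $H_\nu\in L_p((-1,1),\mathcal M_{2r})$, whence $H_\mu\in L_p((-1,1),\mathcal M_{2r})$, which is exactly $(C_4)$.

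For $(C_3)$ I would argue through the resolution of the identity. Since $S_{V,U}$ is self-adjoint with purely discrete spectrum, its eigenprojectors satisfy $\sum_j\mathscr P_j=\mathscr I$ by (\ref{ResIdent}), so the closed linear span of $\bigcup_j\Ran\mathscr P_j$ is all of $\mathbb H$. By (\ref{ProjForm}) we have $\Ran\mathscr P_j=\Ran(\Phi_V(\lambda_j)A_j)$, and by (\ref{PhiTransfOp}), $\Phi_V(\lambda_j)A_j=(\mathscr I+\mathscr K_V)\Phi_0(\lambda_j)A_j$, where the operator $\mathscr K_V$ from Proposition~\ref{SCLemma} is Volterra, so that $\mathscr I+\mathscr K_V$ is bounded and boundedly invertible. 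Applying the inverse of $\mathscr I+\mathscr K_V$, which preserves closed linear spans, I conclude that the closed linear span of $\bigcup_j\Ran(\Phi_0(\lambda_j)A_j)$ is again all of $\mathbb H$.

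It then remains to transport this density to the level of scalar exponentials. Using $\Phi_0(\lambda)=\mathcal W\Upsilon_0(\lambda)$ and the unitarity of $\mathcal W:\tilde{\mathcal H}\to\mathbb H$, the closed linear span of $\bigcup_j\Ran(\Upsilon_0(\lambda_j)A_j)$ equals $\tilde{\mathcal H}=L_2((0,1),\mathbb C^{2r})$. Since $\Ran(\Upsilon_0(\lambda_j)A_j)=\{x\mapsto\e^{2\i\lambda_j x}w\mid w\in\Ran A_j\}$, I would finish by the affine substitution $t=2x-1$, a bijection of $(0,1)$ onto $(-1,1)$ that induces, up to a fixed scalar factor, a unitary map of the corresponding $L_2$-spaces and sends $\e^{2\i\lambda_j x}w$ to a nonzero scalar multiple of $\e^{\i\lambda_j t}w$. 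As multiplying the elements of a system by nonzero scalars does not affect its completeness, this yields the completeness of $\{\e^{\i\lambda_j t}v\mid j\in\mathbb Z,\ v\in\Ran A_j\}$ in $L_2((-1,1),\mathbb C^{2r})$, i.e. $(C_3)$.

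The two operator manipulations are routine, and all the analytic content has already been delivered by Lemma~\ref{AccLemma}, the identities (\ref{ResIdent})--(\ref{ProjForm})--(\ref{PhiTransfOp}) and the invertibility of $\mathscr I+\mathscr K_V$; hence I do not expect a serious obstacle. The only point genuinely requiring care is the final bookkeeping step, where one must check that the change of variable together with the per-$j$ scalar factors $\e^{-\i\lambda_j}$ really preserves completeness and correctly reconciles the factor-of-two convention between $\Upsilon_0$ (exponent $2\i\lambda x$ on $(0,1)$) and the statement of $(C_3)$ (exponent $\i\lambda t$ on $(-1,1)$).
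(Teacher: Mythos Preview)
Your argument is correct. For $(C_4)$ you follow exactly the paper's line: Lemma~\ref{AccLemma} gives $H_\mu=H_\nu$ as distributions on $(-1,1)$, and $H_\nu\in L_p((-1,1),\mathcal M_{2r})$ by the results of \cite{dirac2}.

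For $(C_3)$, however, your route is genuinely different from the paper's. The paper works on the adjoint side: it uses the identity (\ref{C4AuxEq3}), $\sum_j\Phi_0(\lambda_j)A_j\Phi_0^*(\lambda_j)=\mathscr I+\mathscr F_H$, and then computes
\[
\ker(\mathscr I+\mathscr F_H)=\bigcap_j\ker A_j\Phi_0^*(\lambda_j)=\tilde{\mathcal W}\mathcal X^\bot,
\]
with $\mathcal X$ the system in $(C_3)$ and $\tilde{\mathcal W}:L_2((-1,1),\mathbb C^{2r})\to\mathbb H$, $(\tilde{\mathcal W}f)(x)=(f(x),f(-x))^\top$. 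The accelerant theory from \cite{dirac2} (via $H_\nu\in\mathfrak H_p$) gives $\mathscr I+\mathscr F_H>0$, hence $\mathcal X^\bot=\{0\}$. Your argument instead stays on the range side: the resolution of the identity (\ref{ResIdent}) plus (\ref{ProjForm}) and $\Ran\Phi_V^*(\lambda_j)=\mathbb C^{2r}$ give totality of $\bigcup_j\Phi_V(\lambda_j)\Ran A_j$, the bounded invertibility of the Volterra factor $\mathscr I+\mathscr K_V$ transports this to $\Phi_0$, and the unitary $\mathcal W$ together with the affine change $t=2x-1$ (absorbing the harmless scalars $e^{i\lambda_j}$) finishes. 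Your approach is more elementary in that it does not invoke the positivity $\mathscr I+\mathscr F_H>0$; the paper's approach, on the other hand, dovetails with the accelerant machinery already in place and makes the connection $\ker(\mathscr I+\mathscr F_H)\leftrightarrow\mathcal X^\bot$ explicit, which is conceptually useful elsewhere in the argument. The bookkeeping you flag (the factor-of-two convention and the scalar $e^{i\lambda_j}$) is handled correctly.
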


\begin{proof}
It is proved in \cite{dirac2} that for an arbitrary $V\in\mathfrak Q_p$ it holds $H_\nu\in\mathfrak H_p$, where $\nu$ is the spectral measure of the operator $S_{V,I}$. It then follows from Lemma~\ref{AccLemma} that for an arbitrary $V\in\mathfrak Q_p$ and $U\in\mathcal U_{2r}$ it holds $H_\mu\in\mathfrak H_p$, where $\mu:=\mu^{\mathfrak a}$ is the spectral measure of the operator $S_{V,U}$ and $\mathfrak a:=((\lambda_j,A_j))_{j\in\mathbb Z}$ is its spectral data. Therefore, we immediately obtain that the spectral data of the operator $S_{V,U}$ satisfy the condition $(C_4)$.

In order to prove the condition $(C_3)$, observe that by virtue of (\ref{C4AuxEq3}) one has
$$
\ker(\mathscr I+\mathscr F_H)=
\ker \left(
\sum_{j=-\infty}^\infty \Phi_0(\lambda_j)A_j\Phi_0^*(\lambda_j)
\right)
=\bigcap\limits_{j=-\infty}^\infty \ker A_j\Phi_0^*(\lambda_j)
=\tilde{\mathcal W}\mathcal X^\bot,
$$
where $H:=H_\nu$, $\mathcal X:=\{ \e^{\i\lambda_j t}d\mid j\in\mathbb{Z},\ d\in\mathrm{Ran}\ A_j \}$ and $\tilde{\mathcal W}:L_2((-1,1),\mathbb C^{2r})\to\mathbb H$ is the unitary mapping acting by the formula $(\tilde{\mathcal W}f)(x)=(f(x),\ f(-x))^\top$, $x\in(0,1)$. Since $H\in\mathfrak H_p$, it follows from the results of \cite{dirac2} that $\mathscr I+\mathscr F_H>0$ and thus $\ker(\mathscr I+\mathscr F_H)=\{0\}$. Therefore, $\mathcal X^\bot=\{0\}$, which proves the condition $(C_3)$.
\end{proof}

\begin{remark}\label{AccRem}\rm
It is proved in \cite[Lemma~4.2]{dirac2} that for an \emph{arbitrary} sequence $\mathfrak a$ satisfying the conditions $(C_3)$ and $(C_4)$ one has $H_\mu\in\mathfrak H_p$, where $\mu:=\mu^{\mathfrak a}$.
\end{remark}

Next, since eigenvalues of the operator $S_{V,U}$ are zeros of the entire function $\tilde \bigs_{V,U}(\lambda):=\det \bigs_{V,U}(\lambda)$ (see Proposition~\ref{DirPropTh}), the standard technique based on Rouche's theorem implies that eigenvalues of $S_{V,U}$ satisfy the asymptotics (\ref{LambdaAsymp1}) and the condition (\ref{LambdaAsymp2}). Furthermore, since
$$
\|M_{V,U}(\lambda)-M_{0,U}(\lambda)\|=\o(1)
$$
as $\lambda\to\infty$ within the domain $\mathcal O_\varepsilon:=\{\lambda\in\mathbb C\mid \forall m\in\mathbb Z:\ |\lambda-\zeta_m^0|\ge\varepsilon\}$ for some $\varepsilon>0$, one can easily prove (\ref{AlphaAsymp}) and obtain that the spectral data of the operator $S_{V,U}$ satisfy the condition $(C_1)$.

Therefore, so far we proved that the spectral data of the operator $S_{V,U}$ satisfy the conditions $(C_1)$, $(C_3)$ and $(C_4)$.

For an arbitrary sequence $\mathfrak a:=((\lambda_j,A_j))_{j\in\mathbb Z}$ satisfying the conditions $(C_1)$, $(C_3)$ and $(C_4)$, we set $\mu:=\mu^{\mathfrak a}$, $H:=H_\mu$ and $V:=\Theta(H)$ (see Remark~\ref{AccRem}). For every $j\in\mathbb Z$, we then define the operator $\mathscr P_{\mathfrak a,j}:\mathbb H\to\mathbb H$ by the formula
\begin{equation}\label{tildeP}
\mathscr P_{\mathfrak a,j}:=\Phi_V(\lambda_j)A_j\Phi_V^*(\lambda_j).
\end{equation}

\begin{proposition}\label{B2prop}
Let $\mathfrak a$ be an arbitrary sequence satisfying the conditions $(C_1)$, $(C_3)$ and $(C_4)$. Then:
\begin{itemize}
\item[$(i)$]the series $\sum_{j\in\mathbb Z}\mathscr P_{\mathfrak a,j}$ converges to the identity operator $\mathbb H\to\mathbb H$ in the strong operator topology;
\item[$(ii)$]a sequence $\mathfrak a$ satisfies the condition $(C_2)$ if and only if $\{\mathscr P_{\mathfrak a,j}\}_{j\in\mathbb Z}$ is a system of pairwise orthogonal projectors in $\mathbb H$.
\end{itemize}
\end{proposition}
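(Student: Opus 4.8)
The plan is to transport everything from the operators $\Phi_V(\lambda_j)$ to the free transforms $\Phi_0(\lambda_j)$ by means of the factorization (\ref{PhiTransfOp}), deduce (i) from the Krein factorization already recorded in the proof of Lemma~\ref{AccLemma}, and reduce (ii) to a trace/rank count for finite-rank partial sums over spectral bands.

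\textbf{Part (i).} First I would insert (\ref{PhiTransfOp}) into the definition (\ref{tildeP}) to write, for every $j$,
$$
\mathscr P_{\mathfrak a,j}=(\mathscr I+\mathscr K_V)\,\Phi_0(\lambda_j)A_j\Phi_0^*(\lambda_j)\,(\mathscr I+\mathscr K_V^*),
$$
so that $\sum_j\mathscr P_{\mathfrak a,j}$ is the conjugation by the fixed bounded invertible operator $\mathscr I+\mathscr K_V$ of the series $\sum_j\Phi_0(\lambda_j)A_j\Phi_0^*(\lambda_j)$. The first step is to show that this latter series converges strongly to $\mathscr I+\mathscr F_H$, $H:=H_\mu$. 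This is exactly the computation in the proof of Lemma~\ref{AccLemma}: passing through the unitary $\mathcal W$ to the convolution operator with kernel $H(x-s)$ and subtracting the free identity (\ref{C4AuxEq4a}), the identity $\sum_j\Phi_0(\lambda_j)A_j\Phi_0^*(\lambda_j)=\mathscr I+\mathscr F_H$ follows directly from the definition (\ref{HMuDef}) of $H_\mu$; crucially, this derivation uses \emph{only} the asymptotics (\ref{LambdaAsymp1}) and (\ref{LambdaAsymp2}) from $(C_1)$ to guarantee strong convergence, so it applies to an arbitrary sequence $\mathfrak a$ rather than only to genuine spectral data. The second step invokes the Krein mapping: since $V=\Theta(H)$ with $H\in\mathfrak H_p$ (Remark~\ref{AccRem}), the operator $\Theta$ is built (Appendix~\ref{AppKrein}, cf.\ \cite{dirac2}) precisely so that $H$ is the accelerant of $S_{\Theta(H),I}$, whence (\ref{C4AuxEq2}) holds, i.e.\ $(\mathscr I+\mathscr K_V)^{-1}(\mathscr I+\mathscr K_V^*)^{-1}=\mathscr I+\mathscr F_H$. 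Combining the two steps gives
$$
\sum_{j\in\mathbb Z}\mathscr P_{\mathfrak a,j}=(\mathscr I+\mathscr K_V)(\mathscr I+\mathscr F_H)(\mathscr I+\mathscr K_V^*)=\mathscr I,
$$
the convergence remaining strong because the partial sums are uniformly bounded and conjugation by $\mathscr I+\mathscr K_V$ preserves strong limits of such sequences.

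\textbf{Part (ii): structural reductions.} By (\ref{kerPhi}) each $\Phi_V(\lambda_j)$ is injective with $\Ran\Phi_V^*(\lambda_j)=\mathbb C^{2r}$, so every $\mathscr P_{\mathfrak a,j}$ is self-adjoint, non-negative, and $\rank\mathscr P_{\mathfrak a,j}=\rank A_j$. By part~(i) the summands are non-negative with sum $\mathscr I$, hence $0\le\mathscr P_{\mathfrak a,j}\le\mathscr I$; and a family of non-negative operators summing to $\mathscr I$ consists of pairwise orthogonal projectors if and only if each member is idempotent (if $0\le\mathscr P,\mathscr Q$ are projectors with $\mathscr P+\mathscr Q\le\mathscr I$ then testing $\mathscr Q\le\mathscr I-\mathscr P$ on $\Ran\mathscr P$ gives $\mathscr P\mathscr Q=0$). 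Next, since $0\le\mathscr P_{\mathfrak a,j}\le\mathscr I$ one has $\operatorname{Tr}\mathscr P_{\mathfrak a,j}\le\rank\mathscr P_{\mathfrak a,j}=\rank A_j$, with equality if and only if $\mathscr P_{\mathfrak a,j}$ is a projector. Thus (ii) reduces to showing $(C_2)\iff\operatorname{Tr}\mathscr P_{\mathfrak a,j}=\rank A_j$ for all $j$. For each large $n$ set $\mathscr S_n:=\sum_{j\in B_n}\mathscr P_{\mathfrak a,j}$, where $B_n:=\{\,j:\lambda_j\in\Delta_m\text{ for some }-ns< m\le ns\,\}$. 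Then $0\le\mathscr S_n\le\mathscr I$ is finite-rank, and since exponentials $\e^{\i\lambda_j t}$ with distinct $\lambda_j$ are linearly independent, (\ref{kerPhi}) gives $\rank\mathscr S_n=\sum_{j\in B_n}\rank A_j=:N^{\mathfrak a}(n)$, which is exactly the left-hand side of $(C_2)$. Restricting $\{\mathscr P_{\mathfrak a,j}\}_{j\in B_n}$ to the finite-dimensional space $\Ran\mathscr S_n$ shows that $\mathscr S_n$ is an orthogonal projector if and only if each $\mathscr P_{\mathfrak a,j}$, $j\in B_n$, is; equivalently $\operatorname{Tr}\mathscr S_n=N^{\mathfrak a}(n)$.

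\textbf{Part (ii): linking $(C_2)$ to the trace count.} The key observation is that the defect $\delta(n):=N^{\mathfrak a}(n)-\operatorname{Tr}\mathscr S_n=\sum_{j\in B_n}(\rank A_j-\operatorname{Tr}\mathscr P_{\mathfrak a,j})$ is a sum of non-negative terms and is therefore non-decreasing in $n$; consequently \emph{all} $\mathscr P_{\mathfrak a,j}$ are projectors if and only if $\delta(n)=0$ for all $n$, which (by monotonicity and $\delta\ge0$) holds if and only if $\delta(n)\to0$. I would now compare $\mathscr S_n$ with the band sum $\mathscr S_n^0$ for the free operator $S_{0,U_{\mathfrak a}}$, whose summands are genuine orthogonal eigenprojectors, so that $\mathscr S_n^0$ is an orthogonal projector with $\operatorname{Tr}\mathscr S_n^0=\rank\mathscr S_n^0=4nr$. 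Writing $\delta(n)=\bigl(N^{\mathfrak a}(n)-4nr\bigr)-\bigl(\operatorname{Tr}\mathscr S_n-4nr\bigr)$ and granting the trace asymptotics $\operatorname{Tr}\mathscr S_n-\operatorname{Tr}\mathscr S_n^0=\o(1)$ as $n\to\infty$, both directions close: if every $\mathscr P_{\mathfrak a,j}$ is a projector then $\delta\equiv0$, so $N^{\mathfrak a}(n)=\operatorname{Tr}\mathscr S_n=4nr+\o(1)$, and since $N^{\mathfrak a}(n)$ and $4nr$ are integers this forces $N^{\mathfrak a}(n)=4nr$ for large $n$, i.e.\ $(C_2)$; conversely, $(C_2)$ gives $N^{\mathfrak a}(n)-4nr=0$ eventually, whence $\delta(n)=\o(1)$, and monotonicity forces $\delta\equiv0$.

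\textbf{Main obstacle.} The operator-theoretic reductions above are routine; the genuinely delicate point is the trace asymptotics $\operatorname{Tr}\mathscr S_n-4nr=\o(1)$, which must be proved \emph{without} presupposing the projector property. The natural route is to use $\operatorname{Tr}\mathscr P_{\mathfrak a,j}=\operatorname{Tr}\bigl(A_j\,\Phi_V^*(\lambda_j)\Phi_V(\lambda_j)\bigr)$ together with the Riemann--Lebesgue decay of $\mathscr K_V$ on high-frequency data, which yields $\Phi_V^*(\lambda_j)\Phi_V(\lambda_j)=I_{2r}+\o(1)$ as $|\lambda_j|\to\infty$, and to control the accumulated error over the $2n$ bands by the grouped asymptotics (\ref{AlphaAsymp}) of $(C_1)$; equivalently, one shows that the tails $\mathscr I-\mathscr S_n$ and $\mathscr I-\mathscr S_n^0$ differ by a trace-class operator of vanishing trace. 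Making this uniform band-by-band estimate precise, in the spirit of the corresponding analysis in \cite{dirac2} for the case $U=I$, is where the real work of the proof lies.
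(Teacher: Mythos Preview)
Your treatment of part~(i) is correct and is exactly the paper's intended approach: the paper does not write the argument out but refers to \cite{dirac2} and to ``the factorization of integral operators'', which is precisely your conjugation by $\mathscr I+\mathscr K_V$ combined with the Krein identity (\ref{C4AuxEq2}).

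For part~(ii) your route diverges from the paper's, and the divergence is at the point you yourself flag as the obstacle. The paper invokes the \emph{vector analogue of Kadec's $1/4$--theorem} (again by reference to \cite{dirac2}), not a trace count. The reason is that your trace asymptotics $\operatorname{Tr}\mathscr S_n-4nr=\o(1)$ cannot be extracted from $(C_1)$ alone. Your suggested mechanism gives, for each band $\Delta_m$,
\[
\sum_{\lambda_j\in\Delta_m}\operatorname{Tr}\bigl(A_j\,\Phi_V^*(\lambda_j)\Phi_V(\lambda_j)\bigr)
=\operatorname{Tr}P^0_{k(m)}+\varepsilon_m,\qquad \varepsilon_m=\o(1)\ \text{as}\ |m|\to\infty,
\]
using Riemann--Lebesgue for $\Phi_V^*\Phi_V=I_{2r}+\o(1)$ and (\ref{AlphaAsymp}) for the $A_j$. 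Summing the main terms over $-ns<m\le ns$ indeed yields $4nr$, but the accumulated error $\sum_{|m|\le ns}\varepsilon_m$ is a sum of $2ns$ terms each tending to zero, and conditions (\ref{LambdaAsymp1}), (\ref{AlphaAsymp}) carry no summability; a decay like $\varepsilon_m\sim 1/|m|$ is perfectly compatible with $(C_1)$ and makes the accumulated error diverge. So the step ``control the accumulated error over the $2n$ bands by the grouped asymptotics of $(C_1)$'' does not close, and neither does the alternative phrasing via $\mathscr I-\mathscr S_n$ versus $\mathscr I-\mathscr S_n^0$, since $\mathscr F_H$ is not trace class under $(C_4)$ alone.

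The Kadec-type argument sidesteps this entirely. One shows that, for all sufficiently large $|n|$, the perturbed block of exponentials $\{\Phi_0(\lambda_j)v:\lambda_j\in\Delta_{ns+k},\ v\in\Ran A_j\}$ is the image of the free block $\{\Phi_0(\zeta^0_{ns+k})v:v\in\Ran P^0_k\}$ under a map that is uniformly close to the identity in operator norm (this is where the $1/4$--type smallness from (\ref{LambdaAsymp1}) and (\ref{AlphaAsymp}) is used once, not summed). Hence the tails of the two systems span subspaces of the \emph{same} dimension, and after conjugating back by $\mathscr I+\mathscr K_V$ one gets the projector property for the tail $\mathscr P_{\mathfrak a,j}$'s directly; combined with your monotone $\delta(n)$ observation (which is correct), this forces $\delta\equiv 0$ iff $(C_2)$. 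In short: your reductions in part~(ii) are sound, but the missing quantitative input is a Riesz--basis/Kadec statement rather than a trace estimate.
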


\noindent
The proof of Proposition~\ref{B2prop} can be obtained by a straightforward modification of the proof of Proposition~3.3 and Lemma~4.5 in \cite{dirac2}; the proof uses the factorization of integral operators and the vector analogue of Kadec's $1/4$-theorem.

We now use Proposition~\ref{B2prop} to prove Lemmas~\ref{prop1} and \ref{prop3}:

\begin{proofProp13}
Let $\mathfrak a\in\mathfrak B_p$ be the spectral data of the operator $S_{V,U}$. It then follows from the above that $\mathfrak a$ satisfies the conditions $(C_1)$, $(C_3)$ and $(C_4)$. Now observe that by virtue of Proposition~\ref{DirPropTh}, the operators $\mathscr P_{\mathfrak a,j}$, $j\in\mathbb Z$, coincide with eigenprojectors of the operator $S_{V,U}$. Proposition~\ref{B2prop} then implies that $\mathfrak a$ satisfies the condition $(C_2)$. This is the necessity part of Lemma~\ref{prop1}.

Now let $\mathfrak a:=((\lambda_j,A_j))_{j\in\mathbb Z}$ be an \emph{arbitrary} sequence satisfying the conditions $(C_1)-(C_4)$; set $\mu:=\mu^{\mathfrak a}$, $H:=H_\mu$ and $V:=\Theta(H)$. Define the operators $\mathscr P_{\mathfrak a,j}$, $j\in\mathbb Z$, by formula (\ref{tildeP}). It then follows from Proposition~\ref{B2prop} that $\{\mathscr P_{\mathfrak a,j}\}_{j\in\mathbb Z}$ is a complete system of pairwise orthogonal projectors in $\mathbb H$. Then the same arguments as in \cite{dirac2} will imply that $\mathfrak a$ coincides with the spectral data of the operator $S_{V,U}$ with $U:=U_{\mathfrak a}$.

Namely, let $\mathfrak b:=((\zeta_j,C_j))_{j\in\mathbb Z}$ be the spectral data of the operator $S_{V,U}$. As in \cite{dirac2}, we observe that it only suffices to prove the inclusion
\begin{equation}\label{Th4Emb}
\Ran\mathscr P_{\mathfrak a,j}\subset\ker(S_{V,U}-\lambda_j\mathscr I),\qquad j\in\mathbb Z.
\end{equation}
Indeed, taking into account completeness of $\{\mathscr P_{\mathfrak a,j}\}_{j\in\mathbb Z}$, we immediately conclude from (\ref{Th4Emb}) that $\lambda_j=\zeta_j$ for every $j\in\mathbb Z$. From this equality and from (\ref{Th4Emb}) we then obtain that for every $j\in\mathbb Z$, $\mathscr P_j-\mathscr P_{\mathfrak a,j}\ge0$, where $\mathscr P_j$ are eigenprojectors of the operator $S_{V,U}$. However, taking into account completeness of the systems $\{\mathscr P_{\mathfrak a,j}\}_{j\in\mathbb Z}$ and $\{\mathscr P_j\}_{j\in\mathbb Z}$, we observe that $\sum_{j\in\mathbb Z}(\mathscr P_j-\mathscr P_{\mathfrak a,j})=0$ and thus $\mathscr P_j-\mathscr P_{\mathfrak a,j}=0$ for every $j\in\mathbb Z$. Therefore, recalling the representation (\ref{ProjForm}) for $\mathscr P_j$, we find that
$$
\Phi_V(\lambda_j)\{C_j-A_j\}\Phi_V^*(\lambda_j)=0.
\qquad j\in\mathbb Z,
$$
Taking into account (\ref{kerPhi}) we then obtain that $A_j=C_j$. Together with $\lambda_j=\zeta_j$, this implies that $\mathfrak a=\mathfrak b$, as desired.

Thus it only remains to prove (\ref{Th4Emb}). Since $\mathscr P_{\mathfrak a,j}=\Phi_V(\lambda_j)A_j\Phi_V^*(\lambda_j)$ and $\Ran \Phi_V^*(\lambda)=\mathbb C^{2r}$ for an arbitrary $\lambda\in\mathbb C$, we find that for every $j\in\mathbb Z$,
$$
\Ran\mathscr P_{\mathfrak a,j}=\{\varphi_V(\cdot,\lambda_j)A_j c\mid c\in\mathbb C^{2r}\}.
$$
Since for every $\lambda\in\mathbb C$, $\varphi_V(\cdot,\lambda)$ is a solution of the Cauchy problem
\begin{equation}\label{AuxPhiCauchyProbl}
\boldsymbol J\tfrac{\d}{\dx}\varphi+\boldsymbol V\varphi=\lambda\varphi,\qquad \varphi(0,\lambda)=\boldsymbol Ja^*,
\end{equation}
we then find that for every $f\in\Ran\mathscr P_{\mathfrak a,j}$ it holds $\mathfrakS_V(f)=\lambda_j f$ and $f_1(0)=f_2(0)$. Therefore, it only remains to prove that for every $f\in\Ran\mathscr P_{\mathfrak a,j}$ one has $f_1(1)=U f_2(1)$ with $U:=U_{\mathfrak a}$. The latter reads that for every $i\in\mathbb Z$,
\begin{equation}\label{Th4AuxEq1}
b_U \varphi_V(1,\lambda_i)A_i=0,
\end{equation}
where $b_U$ is given by formula (\ref{bU}).

So let us prove (\ref{Th4AuxEq1}). To this end, recalling that $\varphi_V(\cdot,\lambda)$ is a solution of the Cauchy problem (\ref{AuxPhiCauchyProbl}) and integrating by parts, we obtain that for all $i,j\in\mathbb Z$ and $c,d\in\mathbb C^{2r}$,
$$
\lambda_i(\Phi_V(\lambda_i)c\ | \
\Phi_V(\lambda_j)d)=(\boldsymbol J\varphi_V(1,\lambda_i)c\ | \
\varphi_V(1,\lambda_j)d)+\lambda_j(\Phi_V(\lambda_i)c\ | \
\Phi_V(\lambda_j)d)
$$
and thus
\begin{equation}\label{Th3AuxEq1}
(\lambda_i-\lambda_j)\Phi_V(\lambda_j)^*\Phi_V(\lambda_i)
=\varphi_V(1,\lambda_j)^*\boldsymbol J\varphi_V(1,\lambda_i).
\end{equation}
Since $\mathscr P_{\mathfrak a,i}\mathscr P_{\mathfrak a,j}=0$ as $i\neq j$ and for all $\lambda\in\mathbb C$ it holds $\ker\Phi_V(\lambda)=\{0\}$ and $\Ran\Phi_V^*(\lambda)=\mathbb C^{2r}$, we find that $A_j\Phi_V^*(\lambda_j)\Phi_V(\lambda_i)A_i=0$, $i\neq j$. Therefore, we obtain from (\ref{Th3AuxEq1}) that
\begin{equation}\label{Th3AuxEq2}
A_j\varphi_V(1,\lambda_j)^*\boldsymbol J\varphi_V(1,\lambda_i)A_i=0,\qquad i\neq j.
\end{equation}

Let $j\in\mathbb Z$. Taking into account (\ref{Th3AuxEq2}), we find that
\begin{equation}\label{Th3AuxEq3}
\left\{\sum_{k=1}^s (-1)^{n+1} \sum_{\lambda_j\in\Delta_{ns+k}}\
\boldsymbol J\varphi_V(1,\lambda_j)A_j\right\}^*\varphi_V(1,\lambda_i)A_i=0
\end{equation}
for large values of $n\in\mathbb Z$. If we show that
\begin{equation}\label{Th3AuxEq4}
\lim\limits_{n\to\infty}\left\{\sum_{k=1}^s (-1)^{n+1} \sum_{\lambda_j\in\Delta_{ns+k}}\
\boldsymbol J\varphi_V(1,\lambda_j)A_j\right\}^*=b_U,
\end{equation}
then passing to the limit $n\to\infty$ in (\ref{Th3AuxEq3}) would yield (\ref{Th4AuxEq1}).
In order to prove (\ref{Th3AuxEq4}), taking into account the Riemann-Lebesgue lemma, (\ref{LambdaAsymp1}) and (\ref{AlphaAsymp}) we find that for every $k\in\{1,\ldots,s\}$,
$$
\lim\limits_{n\to\infty} \left\{ (-1)^{n+1}
\sum_{\lambda_j\in\Delta_{ns+k}}\ \boldsymbol J\varphi_V(1,\lambda_j)A_j \right\}^*
$$
$$
=\lim\limits_{n\to\infty} \left\{ (-1)^{n+1} \boldsymbol J\varphi_0(1,\zeta_{ns+k}^0)
\sum_{\lambda_j\in\Delta_{ns+k}} A_j\right\}^*
$$
$$
=\frac1{\sqrt2}
\left\{ \lim\limits_{n\to\infty} \sum_{\lambda_j\in\Delta_{ns+k}} A_j \right\}
\begin{pmatrix}\e^{-\i\gamma_k},&-\e^{\i\gamma_k}\end{pmatrix}
=
\frac1{\sqrt2}\begin{pmatrix}\e^{-\i\gamma_k}P_k^0,&-\e^{\i\gamma_k}P_k^0\end{pmatrix}.
$$
Since
$$
\sum_{k=1}^s \left\{\frac1{\sqrt2}
\begin{pmatrix}\e^{-\i\gamma_k}P_k^0,&-\e^{\i\gamma_k}P_k^0\end{pmatrix}\right\}
=\frac1{\sqrt2}\begin{pmatrix}U^{-1/2},&-U^{1/2}\end{pmatrix}=b_U,
$$
(\ref{Th3AuxEq4}) follows and thus we have proved that $\mathfrak a=\mathfrak b$, where $\mathfrak b$ is the spectral data of the operator $S_{V,U}$ with $V=\Theta(H)$ and $U=U_{\mathfrak a}$. This is the sufficiency part of Lemma~\ref{prop1} and the proof of Lemma~\ref{prop3}.
\end{proofProp13}

The proof of Lemma~\ref{prop2} repeats the proof of Theorem~1.3 in \cite{dirac2} and therefore we omit it in this paper.

\subsection{Proof of Theorems~\ref{Th1}--\ref{Th3}}

We now use Lemmas~\ref{reduction}--\ref{prop3} to prove Theorems~\ref{Th1}~--~\ref{Th3} and thus solve the inverse spectral problem for the operators $T_{q,U}$.

Let
$$
\mathscr T:=\{T_{q,U}:\mathcal H\to\mathcal H\mid q\in\mathcal Q_p,\ U\in\mathcal U_{2r}\},\quad
\mathscr S:=\{S_{V,U}:\mathbb H\to\mathbb H\mid V\in\mathfrak Q_p,\ U\in\mathcal U_{2r}\}.
$$
Recall that for every operator $T_{q,U}\in\mathscr T$ we introduce the \emph{associated operator} $S_{V,U}\in\mathscr S$ with the potential $V$ given by formula (\ref{Q}).
Taking into account that the mapping
$$
\mathcal Q_p\owns q\mapsto V(x):=\begin{pmatrix}0&q(x)\\q(-x)^*&0\end{pmatrix}\in
\{V\in\mathfrak Q_p\mid V(x)J=-JV(x)\ \text{a.e. on}\ (0,1)\}
$$
is bijective, we arrive at the following obvious remark:

\begin{remark}\label{AssocRem}
Let $S_{V,U}\in\mathscr S$. Then there exists an operator $T_{q,U}\in\mathscr T$ such that $S_{V,U}$ is associated to $T_{q,U}$ if and only if the potential $V$ of the operator $S_{V,U}$ satisfies the anti-commutative relation
$$
V(x)J=-JV(x)
$$
a.e. on $(0,1)$. In this case, such operator $T_{q,U}$ is unique and its potential $q$ can be found from $V$ by formula (\ref{qFromQ}).
\end{remark}

Now we are ready to prove Theorem~\ref{Th1} providing a complete description of the class $\mathfrak A_p$ of the spectral data of the operators $T_{q,U}$:

\begin{proofTh1}
\emph{Necessity.} Let $\mathfrak a\in\mathfrak A_p$ be the spectral data of the operator $T_{q,U}$ and $S_{V,U}$ be the associated operator. It then follows from Lemma~\ref{reduction} that $\mathfrak a$ is the spectral data of the operator $S_{V,U}$. From Lemmas~\ref{prop1} and \ref{prop3} we then obtain that $\mathfrak a$ satisfies the conditions $(C_1)-(C_4)$ and that $V=\Theta(H)$, where $H:=H_\mu$ and $\mu:=\mu^{\mathfrak a}$. Since the operator $S_{V,U}$ is associated to $T_{q,U}$, by virtue of Remark~\ref{AssocRem} we also obtain that $\mathfrak a$ satisfies the condition $(C_5)$.

\emph{Sufficiency.} Let $\mathfrak a$ be an arbitrary sequence satisfying the conditions $(C_1)-(C_5)$. From Lemmas~\ref{prop1}--\ref{prop3} we then obtain that $\mathfrak a$ is the spectral data of the unique operator $S_{V,U}$ with $V=\Theta(H)$ and $U=U_{\mathfrak a}$, where $H:=H_\mu$ and $\mu:=\mu^{\mathfrak a}$. Since $\mathfrak a$ satisfies the condition $(C_5)$, by virtue of Remark~\ref{AssocRem} we then obtain that there exists a unique operator $T_{q,U}$ such that $S_{V,U}$ is associated to $T_{q,U}$. By virtue of Lemma~\ref{reduction}, the spectral data of the operator $T_{q,U}$ coincide with $\mathfrak a$. This proves the sufficiency part of Theorem~\ref{Th1}.
\end{proofTh1}

Next, we prove Theorem~\ref{Th2} claiming that the spectral data of the operator $T_{q,U}$ determine the potential $q$ and the unitary matrix $U$ uniquely:

\begin{proofTh2}
Let $\mathfrak a\in\mathfrak A_p$ be the spectral data of the operator $T_{q,U}$ and $\tilde{\mathfrak a}\in\mathfrak A_p$ be the spectral data of the operator $T_{\tilde q,\tilde U}$. Assume that $\mathfrak a=\tilde{\mathfrak a}$. Let $S_{V,U}$ be the associated operator to $T_{q,U}$ and $S_{\tilde V,\tilde U}$ be the associated operator to $T_{\tilde q,\tilde U}$. It then follows from Lemma~\ref{reduction} that $\mathfrak a$ is the spectral data of $S_{V,U}$ and $\tilde{\mathfrak a}$ is the spectral data of $S_{\tilde V,\tilde U}$. Since $\mathfrak a=\tilde{\mathfrak a}$, it then follows from Lemma~\ref{prop2} that $S_{V,U}=S_{\tilde V,\tilde U}$. From Remark~\ref{AssocRem} we then obtain that $T_{q,U}=T_{\tilde q,\tilde U}$.
\end{proofTh2}

Finally, we prove Theorem~\ref{Th3} suggesting how to find the potential $q$ and the unitary matrix $U$ from the spectral data of the operator $T_{q,U}$:

\begin{proofTh3}
Let $\mathfrak a\in\mathfrak A_p$ be a putative spectral data of the operator $T_{q,U}$. It then follows from Lemma~\ref{reduction} that $\mathfrak a$ is the spectral data of the associated operator $S_{V,U}$. From Lemmas~\ref{prop1}--\ref{prop3} we then obtain that such operator $S_{V,U}$ is determined by its spectral data uniquely and that $V=\Theta(H)$ and $U=U_{\mathfrak a}$, where $H:=H_\mu$ and $\mu:=\mu^{\mathfrak a}$. Since $\mathfrak a$ satisfies the condition $(C_5)$, from Remark~\ref{AssocRem} we then obtain that there exists a \emph{unique} operator $T_{q,U}$ such that $S_{V,U}$ is associated to $T_{q,U}$ and that the potential $q$ of the operator $T_{q,U}$ can be found by formula (\ref{qFromQ}).
\end{proofTh3}

\subsection*{Acknowledgments} 
The author is grateful to his supervisor Yaroslav Mykytyuk for valuable ideas, immense patience and permanent attention to this work. The author would like to thank Rostyslav Hryniv for valuable suggestions in preparing this manuscript.

\appendix

\section{Spaces}\label{AppSpaces}

In this appendix, we introduce some spaces that are used in this paper.

For an arbitrary Banach space $X$, we denote by $L_p((a,b),X)$, $p\in[1,\infty)$, the Banach space of all strongly measurable functions $f:(a,b)\to X$ for which the norm
$$
\|f\|_{L_p}:=\left(\int_a^b \|f(t)\|_X^p\d t\right)^{1/p}
$$
is finite. We denote by $C^k([a,b],X)$ the Banach space of all $k$ times continuously differentiable functions $[a,b]\to X$ with the standard supremum norm. We write $W_p^1((a,b),X)$, $p\in[1,\infty)$, for the Sobolev space that is the completion of the linear space $C^1([a,b],X)$ by the norm
$$
\|f\|_{W_p^1}:=\left(\int_a^b \|f(t)\|_X^p\d t\right)^{1/p}
+\left(\int_a^b \|f'(t)\|_X^p\d t\right)^{1/p};
$$
every function $f\in W_p^1((a,b),X)$ has the derivative $f'$ belonging to $L_p((a,b),X)$.

As mentioned in \emph{Notations}, we write $\mathcal M_r$ for the Banach algebra of all $r\times r$ matrices with complex entries and identify it with the Banach algebra of all linear operators $\mathbb C^r\to\mathbb C^r$ endowed with the standard norm.

We denote by $G_p(\mathcal M_r)$, $p\in[1,\infty)$, the set of all measurable functions $K:[0,1]^2\to \mathcal M_r$ such that for all $x,t\in[0,1]$, the functions $K(x,\cdot)$ and $K(\cdot,t)$ belong to $L_p((0,1),\mathcal M_r)$ and, moreover, the mappings
$$
[0,1]\ni x\mapsto K(x,\cdot)\in L_p((0,1),\mathcal M_r),\qquad
[0,1]\ni t\mapsto K(\cdot,t)\in L_p((0,1),\mathcal M_r)
$$
are continuous. The set $G_p(\mathcal M_r)$ becomes a Banach space upon introducing the norm
$$
\|K\|_{G_p}=\max\left\{
\max\limits_{x\in[0,1]}\|K(x,\cdot)\|_{L_p},\
\max\limits_{t\in[0,1]}\|K(\cdot,t)\|_{L_p}\right\}.
$$
We denote by $G_p^+(\mathcal M_r)$ the set of all functions $K\in G_p(\mathcal M_r)$ such that $K(x,t)=0$ a.e. in $\Omega^-:=\{(x,t) \ | \ 0<x<t<1\}$.

Finally, we denote by $\mathcal S$ the Schwartz space of all smooth functions $f\in C^\infty(\mathbb R)$ whose derivatives (including the function itself) decay at infinity faster than any power of $|x|^{-1}$, i.e.
$$
\mathcal S:=\{f\in C^\infty(\mathbb R)\mid x^\alpha \mathrm D^\beta f(x)\to0\ as\ |x|\to\infty,\quad
\alpha,\beta\in\mathbb N\cup\{0\}\}.
$$
We set $\mathcal S^r:=\{(f_1,\ldots,f_r)^\top\mid f_j\in\mathcal S,\ j=1,\ldots,r\}$.

\section{The Krein accelerants}\label{AppKrein}

Here, we recall some facts concerning the notion of the Krein accelerants (see, e.g., \cite{dirac1,sturm,dirac2}).

\begin{definition}\label{AccDef}
A function $H\in L_1((-1,1),\mathcal M_r)$ is called an accelerant if $H(-x)=H(x)^*$ a.e. on $(-1,1)$ and for every $a\in(0,1]$, the integral equation
$$
f(x)+\int_0^a H(x-t)f(t)\ \d t=0,\qquad x\in(0,a),
$$
has only zero solution in $L_2((0,a),\mathbb C^r)$.
\end{definition}

\noindent
Throughout this paper, we denote by $\mathfrak H_p:=\mathfrak H_p(\mathcal M_{2r})$, $p\in[1,\infty)$, the set of all $2r\times2r$ matrix-valued accelerants belonging to the space $L_p((-1,1),\mathcal M_{2r})$; we endow $\mathfrak H_p$ with the metric of $L_p((-1,1),\mathcal M_{2r})$.

It is known (see, e.g., \cite{MykDirac}) that a function $H\in L_p((-1,1),\mathcal M_{2r})$ belongs to $\mathfrak H_p$ if and only if the Krein equation
\begin{equation}\label{KreinEq}
R(x,t)+H(x-t)+\int_0^x R(x,s)H(s-t)\ \d s=0,\qquad 0\le t\le x\le1,
\end{equation}
is solvable in $G_p^+(\mathcal M_{2r})$ (see Appendix~\ref{AppSpaces}). In this case, a solution of (\ref{KreinEq}) is unique and we denote it by $R_H(x,t)$. We then define the Krein mapping $\Theta:\mathfrak H_1\to\mathfrak Q_1$ (see \emph{Notations}) by the formula
\begin{equation}\label{ThetaDef}
[\Theta(H)](x):=\i R_H(x,0),\qquad x\in(0,1).
\end{equation}
It is proved in \cite{dirac2} that for every $p\in[1,\infty)$, the Krein mapping acts from $\mathfrak H_p$ to $\mathfrak Q_p$ and, moreover, appears to be a homeomorphism between $\mathfrak H_p$ and $\mathfrak Q_p$.

\end{document}